\newtheorem{theorem}{Theorem}[section]
\newtheorem{lemma}[theorem]{Lemma}
\theoremstyle{definition}
\newtheorem{definition}[theorem]{Definition}
\theoremstyle{remark}
\newtheorem{remark}[theorem]{Remark} 
 \title{{\bf On Spectral Properties of some Class of
Non-selfadjoint Operators}}
\author{Maksim \,V.~Kukushkin   \\ \\
  \small  \textit{International Committee  Continental,}\\\textit{\small\textit{Russia, Geleznovodsk, kukushkinmv@rambler.ru}} }
\date{}
\begin{document}

\maketitle

{ \it
\begin{flushright}
 Blessed memory of  Isai I. Mikaelian  is  devoted
\end{flushright}
}

\begin{abstract}
In this paper  we    explore a certain  class of non-selfadjoint operators acting    in  a complex  separable Hilbert space. We consider a  perturbation of a non-selfadjoint operator by an operator  that is also  non-selfadjoint. Our consideration is based  on    known spectral  properties of the    real component of a  non-selfadjoint compact  operator. Using a technic of the sesquilinear form theory we establish the   compactness property of the  resolvent,   obtain the  asymptotic equivalence between   the
real component of the resolvent and the resolvent of the real component for some class  of   non-selfadjoint operators.
We obtain   a classification of non-selfadjoint operators in accordance with   belonging   their    resolvent  to   the  Schatten-von Neumann  class and formulate  a sufficient condition of completeness of the root vectors system. Finally we obtain  an   asymptotic formula for  eigenvalues of the considered class of non-selfadjoint operators.
\end{abstract} \maketitle

\section{Introduction}
It is remarkable that initially the perturbation theory of selfadjoint operators was born in  the  works of M. Keldysh
\cite{firstab_lit:1keldysh 1951}-\cite{firstab_lit:3keldysh 1971} and had been  motivated by  the   works of such famous scientists as T. Carleman \cite{firstab_lit:1Carleman} and Ya.  Tamarkin  \cite{firstab_lit:1Tamarkin}.   Over time   many papers were  published within the    framework of this theory, for instance    F. Browder \cite{firstab_lit:1Browder},  M. Livshits \cite{firstab_lit:1Livshits}, B. Mukminov \cite{firstab_lit:1Mukminov}, I. Glazman \cite{firstab_lit:1Glazman}, M. Krein \cite{firstab_lit:1Krein}, B. Lidsky \cite{firstab_lit:1Lidskii},  A. Marcus \cite{firstab_lit:1Markus},\cite{firstab_lit:2Markus}, V. Matsaev \cite{firstab_lit:1Matsaev}-\cite{firstab_lit:3Matsaev}, S. Agmon \cite{firstab_lit:2Agmon}, V. Katznelson \cite{firstab_lit:1Katsnel'son}.
Nowadays there  exists   a  huge amount of theoretical results formulated  in the work of A. Shkalikov    \cite{firstab_lit:Shkalikov A.}. However for applying  these results for a concrete operator  we must have  a representation   of one   by  the sum of   the   main part  (in the other words  a  so-called   non-perturbing operator) and the   operator-perturbation.
  It is essential   that the main part must be an operator of a special type either a  selfadjoint or a  normal operator. If we consider a case  where  in the representation   the  main part is  neither selfadjoint nor normal and we cannot approach the  required representation in an  obvious  way, then it is possible to   use  another technique  based on    properties  of the real component of the initial operator. This is a subject to consider  in the second  section. In the third  section   we
demonstrate the  significance   of the obtained abstract results  and consider  concrete operators. Note that the  relevance   of such consideration  is based on the following.
 The eigenvalue problem is still relevant for  the  second order fractional  differential operators.  Many papers were  devoted to this question, for instance the papers
\cite{firstab_lit:1Nakhushev1977}, \cite{firstab_lit:1Aleroev1984}-\cite{firstab_lit:3Aleroev2000}.   The singular number   problem for the resolvent of  the second order differential operator
  with the  Riemann-Liouville  fractional derivative in the  final  term  is considered in the paper \cite{firstab_lit:1Aleroev1984}. It is proved
that the resolvent   belongs to the Hilbert-Schmidt   class.  The problem
of    root functions  system completeness  is researched in the paper \cite{firstab_lit:Aleroev1989}, also a similar problem is considered
in the paper \cite{firstab_lit:3Aleroev2000}. We would like to study spectral properties of some class of  non-selfadjoint operators in   the abstract case.  Via obtained results  we   research a  multidimensional case of the  second order fractional  differential operator   which can be reduced to the
  cases   considered in the papers listed above. For this purpose we deal with the extension of the
Kipriyanov fractional differential operator considered in detail in the papers \cite{firstab_lit:kipriyanov1960}-\cite{firstab_lit:2.2kipriyanov1960}.

\section{Preliminaries}

 Let    $ C,C_{i} ,\;i\in \mathbb{N}_{0}$ be positive real constants. We   assume     that  the    values  of $C$   can be different in   various formulas  but the values of $C_{i} ,\;i\in \mathbb{N}_{0}$ are  certain.  Everywhere further  we consider   linear    densely defined operators acting in a separable complex  Hilbert space $\mathfrak{H}$. Denote by $ \mathcal{B} (\mathfrak{H})$    the set of linear bounded operators acting in   $\mathfrak{H}.$
 Denote by    $   \mathrm{D}(L),\,   \mathrm{R}(L),\,  \mathrm{N}(L)   $    the   domain of definition, the range, and the inverse image of zero of the operator $L$ accordingly. Let    $\mathrm{P}(L)$ be  a resolvent set of the operator $L.$
 Denote by    $ R_{L}(\zeta),\,\zeta\in \mathrm{P}(L),\,[R_{L} :=R_{L}(0)]$ the  resolvent of the operator $L.$ Let  $\lambda_{i}(L),\,i\in \mathbb{N} $ denote the eigenvalues of the operator $L.$
 Suppose $L$ is  a compact operator and  $|L|:=(L^{\ast}L)^{1/2},\,r(|L|):={\rm dim}\, \mathrm{R} (|L|);$ then   the eigenvalues of the operator $|L|$ are called the   {\it singular  numbers} ({\it s-numbers}) of the operator $L$ and are denoted by $s_{i}(L),\,i=1,\,2,...\,,r(|L|).$ If $r(|L|)<\infty,$ then we put by definition     $s_{i}=0,\,i=r(|L|)+1,2,...\,.$
 According  to the terminology of the monograph   \cite{firstab_lit:1Gohberg1965}  the dimension  of the  root vectors subspace  corresponding  to a certain  eigenvalue $\lambda_{k}$  is called     the    {\it algebraic multiplicity} of the eigenvalue   $\lambda_{k}.$
Let  $\nu(L)$ denote  the sum of all algebraic multiplicities of the operator $L.$ Denote by  $\mathfrak{S}_{p}(\mathfrak{H}),\, 0< p<\infty $         the Schatten-von Neumann    class and   let   $\mathfrak{S}_{\infty}(\mathfrak{H})$ denote  the set of compact operators. By definition, put
$$
\mathfrak{S}_{p}(\mathfrak{H}):=\left\{ L: \mathfrak{H}\rightarrow \mathfrak{H},  \sum\limits_{i=1}^{\infty}s^{p}_{i}(L)<\infty,\;0< p<\infty \right\}.
$$
    Suppose  $L$ is  an   operator that has    a  compact resolvent and
$s_{n}(R_{L})\leq   C \,n^{-\mu},\,n\in \mathbb{N},\,0\leq\mu< \infty;$ then
 we
 denote by  $\mu(L) $   order of the     operator $L$ in accordance with  the definition given in the paper  \cite{firstab_lit:Shkalikov A.}.
 Denote by  $L_{\mathfrak{R}}:= \left(L+L^{*}\right)/2,\,L_{\mathfrak{I}}:= \left(L-L^{*}\right)/2 i$
   the  real  and the   imaginary component    of the operator $L$ accordingly  and let
    $   \tilde{L}   $ denote    the  closure of  the operator $L.$
In accordance with  the terminology of the monograph  \cite{firstab_lit:kato1980} the set $\Theta(L):=\{z\in \mathbb{C}: z=(Lf,f)_{\mathfrak{H}},\,f\in \mathrm{D}(L),\,\|f\|_{\mathfrak{H}}=1\}$ is called   the   {\it numerical range}  of  the  operator $L.$
 We use the definition of  the   sectorial property given in \cite[p.280]{firstab_lit:kato1980}.  An  operator $L$ is called   a {\it sectorial } operator  if its  numerical range   belongs to a  closed
sector     $\mathfrak{L}_{\gamma}(\theta):=\{\zeta:\,|\arg(\zeta-\gamma)|\leq\theta<\pi/2\} ,$ where      $\gamma$ is the  vertex   and  $ \theta$ is the  semi-angle of the sector   $\mathfrak{L}_{\gamma}(\theta).$
We  shall  say that the  operator $L$ has a positive sector       if $\mathrm{Im}\,\gamma=0,\,\gamma>0.$
According to the  terminology  of the monograph   \cite{firstab_lit:kato1980} an operator $L$ is called    {\it strictly  accretive}   if the following relation  holds  $\mathrm{Re}(Lf,f)_{\mathfrak{H}}\geq C\|f\|^{2}_{\mathfrak{H}},\,f\in \mathrm{D}(L).$ In accordance with  the definition  \cite[p.279]{firstab_lit:kato1980}   an  operator $L$ is called    {\it m-accretive}     if the next relation  holds $(A+\zeta)^{-1}\in \mathcal{B}(\mathfrak{H}),\,\|(A+\zeta)^{-1}\| \leq   (\mathrm{Re}\zeta)^{-1},\,\mathrm{Re}\zeta>0. $
An operator $L$ is called    {\it m-sectorial}   if $L$ is   sectorial    and $L+ \beta$ is m-accretive   for some constant $\beta.$   An operator $L$ is called     {\it symmetric}     if one is densely defined and the next equality  holds $(Lf,g)_{\mathfrak{H}}=(f,Lg)_{\mathfrak{H}},\,f,g\in  \mathrm{D} (L).$
A symmetric operator is called     {\it positive}        if     the   values of its  quadratic form  are nonnegative.
Denote by $\mathfrak{H}_{L},\,\|\cdot\|_{L}$ the   energetic space generated by the operator $L$  and the  norm on this space respectively   (see  \cite{firstab_lit:Eb. Zeidler},\cite{firstab_lit:mihlin1970}).
In accordance with  the denotation of the paper   \cite{firstab_lit:kato1980}   we consider a      sesquilinear form   $ \mathfrak{t}  [\cdot,\cdot]$
defined on a linear manifold  of the Hilbert space $\mathfrak{H}$ (further we use the term {\it form}).  Denote by $  \mathfrak{t} [\cdot ]$ the  quadratic form corresponding to the sesquilinear form $\mathfrak{t}  [\cdot,\cdot].$
Let   $\mathfrak{Re} \,  \mathfrak{t}=(\mathfrak{t}+\mathfrak{t}^{\ast})/2,\,\mathfrak{Im}\,\mathfrak{t}   =(\mathfrak{t}-\mathfrak{t}^{\ast})/2i$
   be the   real  and    imaginary component     of the   form $ \mathfrak{t}$ respectively, where $\mathfrak{t}^{\ast}[u,v]=\mathfrak{t} \overline{[v,u]},\;\mathrm{D}(\mathfrak{t} ^{\ast})=\mathrm{D}(\mathfrak{t}).$ According to these definitions, we have $\mathfrak{Re} \,
 \mathfrak{t}[\cdot]=\mathrm{Re}\,\mathfrak{t}[\cdot],\,\mathfrak{Im} \, \mathfrak{t}[\cdot]=\mathrm{Im}\,\mathfrak{t}[\cdot].$ Denote by $\tilde{\mathfrak{t}}$ the  closure   of the   form $\mathfrak{t}\,.$  The range of a quadratic form    $\mathfrak{t}[f],\,f\in \mathrm{D}(\mathfrak{t}),\,\|f\|_{\mathfrak{H}}=1$ is called the  {\it range} of the sesquilinear form  $\mathfrak{t} $ and is denoted by $\Theta(\mathfrak{t}).$
 A  form $\mathfrak{t}$ is called    {\it sectorial}    if  its    range  belongs to   a sector  having  the vertex $\gamma$  situated at the real axis and the semi-angle $0\leq\theta<\pi/2.$   Suppose   $\mathfrak{l}$ is a closed sectorial form; then  a linear  manifold  $\mathrm{D}'\subset\mathrm{D} (\mathfrak{l})$   is
called the  core of $\mathfrak{l}$ if the restriction   of $\mathfrak{l}$ to   $\mathrm{D}'$ has the   closure
$\mathfrak{l}.$    Due to Theorem 2.7 \cite[p.323]{firstab_lit:kato1980}  there exist unique    m-sectorial operators  $L_{\mathfrak{l}},L_{\mathfrak{Re}\, \mathfrak{l}} $  associated  with   the  closed sectorial   forms $\mathfrak{l},\mathfrak{Re}\, \mathfrak{l}$   respectively.   The operator  $L_{\mathfrak{Re}\, \mathfrak{l}} $ is called the {\it real part} of the operator $L_{\mathfrak{l}}$ and is denoted by  $Re\, L_{\mathfrak{l}}.$ Suppose  $L$ is a sectorial densely defined operator and $\mathfrak{k}[u,v]:=(Lu,v)_{\mathfrak{H}},\,\mathrm{D}(\mathfrak{k})=\mathrm{D}(L);$  then
 due to   Theorem 1.27 \cite[p.318]{firstab_lit:kato1980}   the   form $\mathfrak{k}$ is   closable, due to
   Theorem 2.7 \cite[p.323]{firstab_lit:kato1980} there exists   a unique m-sectorial operator   $T_{\tilde{\mathfrak{k}}}$   associated  with  the form $\tilde{\mathfrak{k}}.$  In accordance with the  definition \cite[p.325]{firstab_lit:kato1980} the    operator $T_{\tilde{\mathfrak{k}}}$ is called the   {\it Friedrichs extension} of the operator $L.$

  Further,    if it is not     stated otherwise   we  use   the  notations   of  the  monographs   \cite{firstab_lit:1Gohberg1965},  \cite{firstab_lit:kato1980}, \cite{firstab_lit:samko1987}.
  Consider a pair of  complex  separable Hilbert spaces $\mathfrak{H},\mathfrak{H}_{+}$ such that
\begin{equation}\label{2.1}
\mathfrak{H}_{+}\hookrightarrow\hookrightarrow\mathfrak{ H} .
\end{equation}
This denotation  implies  that $\mathfrak{H}_{+}$ is dense in $\mathfrak{H}$   and we have a bounded embedding provided by the inequality
\begin{equation}\label{2.2}
\|f\|_{\mathfrak{H}}\leq \|f\|_{\mathfrak{H}_{+}},\;f\in \mathfrak{H}_{+},
\end{equation}
moreover   any  bounded  set in the space   $\mathfrak{H}_{+} $ is a compact set in   the space $\mathfrak{H} .$
We  consider   non-selfadjoint operators which can be represented by a sum $W=T+A.$ The operators $T$ and $A$  are called     a {\it main part} and an {\it  operator-perturbation}   respectively,     both   these operators  act   in $\mathfrak{H}.$     We   assume  that:  there  exists a linear manifold  $\mathfrak{M}\subset \mathfrak{H}_{+} $    that   is  dense in $ \mathfrak{H}_{+},$   the operators $T,A$ and their adjoint operators are   defined on $\mathfrak{M}.$   Further,  we may assume that      $\mathrm{D}(W)=\mathfrak{M}.$ This  gives us the opportunity to  prove   that  $\mathrm{D}(W )\subset \mathrm{D}(W^{\ast}) .$
  Suppose the  operator $W^{+}$ is the   restriction of   $W^{\ast}$ to $ \mathrm{D}(W );$   then the operator $W^{+}$ is called a {\it formal adjoint} operator with respect to $W.$
 Denote  by $\tilde{W}^{+} $ the closure of the operator $W^{+}.$
 Further, we assume  that the following conditions are fulfilled
\begin{equation}\label{2.3}
\mathrm{i})\,\mathrm{Re}(Tf,f)_{\mathfrak{H}}\geq C_{0}\|f\|^{2}_{\mathfrak{H}_{+}}\!,\;\mathrm{ii})\,
\left| (Tf,g)_{\mathfrak{H}}\right|\leq C_{1}\|f\|_{\mathfrak{H}_{+}}\|g\|_{\mathfrak{H}_{+}},
$$
$$
 \mathrm{iii})\,\mathrm{Re}(Af,f)_{\mathfrak{H}}\geq C_{2} \|f\|^{2}_{\mathfrak{H}}, \; \mathrm{iv})\,|(Af,g)_{\mathfrak{H}}|\leq C_{3}\|f\|_{\mathfrak{H}_{+}}\|g\|_{\mathfrak{H} },\,f,g\in  \mathfrak{M}.
\end{equation}
 Due to these conditions  it is easy to prove  that the operators $W,W_{\mathfrak{R}}$ are closeable (see   Theorem 3.4 \cite[p.268]{firstab_lit:kato1980}).
Denote by $\tilde{W}_{\mathfrak{R}}$ the  closure  of the operator $W_{\mathfrak{R}}.$
  To make  some formulas readable we also use the following form of notation
 $V:=  \left(R_{\tilde{W}}\right)_{\mathfrak{R}},\, H:=\tilde{W}_{\mathfrak{R}}.$

\section{Main results}

In this section  we   formulate abstract theorems that  are generalizations of some particular  results obtained by the author. First  we generalize Theorem 4.2 \cite{firstab_lit:1kukushkin2018} establishing the sectorial property of the second order fractional differential operator.
\begin{lemma}\label{L3.1}
 The operators $\tilde{W},\,\tilde{W}^{+}$ have a positive sector.
\end{lemma}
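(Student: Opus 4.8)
The plan is to work directly with the numerical range $\Theta(W)$ on the core $\mathfrak{M}$, show that it sits inside a sector $\mathfrak{L}_{\gamma}(\theta)$ with a real positive vertex $\gamma$ and a semi-angle $\theta<\pi/2$, and then transfer this to the closures $\tilde{W},\tilde{W}^{+}$. First I would fix $f\in\mathfrak{M}$ with $\|f\|_{\mathfrak{H}}=1$ and split $(Wf,f)_{\mathfrak{H}}=(Tf,f)_{\mathfrak{H}}+(Af,f)_{\mathfrak{H}}$. Conditions i) and iii) of \eqref{2.3} give the lower bound $\mathrm{Re}(Wf,f)_{\mathfrak{H}}\geq C_{0}\|f\|^{2}_{\mathfrak{H}_{+}}+C_{2}$; in particular, using $\|f\|_{\mathfrak{H}_{+}}\geq\|f\|_{\mathfrak{H}}=1$ from \eqref{2.2}, we already have $\mathrm{Re}(Wf,f)_{\mathfrak{H}}\geq C_{0}+C_{2}>0$, which pins the vertex to the positive real semi-axis.

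For the imaginary part I would estimate $|\mathrm{Im}(Wf,f)_{\mathfrak{H}}|\leq|(Tf,f)_{\mathfrak{H}}|+|(Af,f)_{\mathfrak{H}}|$ and apply ii) and iv) of \eqref{2.3} with $g=f$, obtaining $|(Tf,f)_{\mathfrak{H}}|\leq C_{1}\|f\|^{2}_{\mathfrak{H}_{+}}$ and $|(Af,f)_{\mathfrak{H}}|\leq C_{3}\|f\|_{\mathfrak{H}_{+}}$. Writing $t:=\|f\|^{2}_{\mathfrak{H}_{+}}\geq1$ and noting $\|f\|_{\mathfrak{H}_{+}}=\sqrt{t}\leq t$, these collapse to $|\mathrm{Im}(Wf,f)_{\mathfrak{H}}|\leq(C_{1}+C_{3})\,t$, while the real-part bound reads $\mathrm{Re}(Wf,f)_{\mathfrak{H}}\geq C_{0}t+C_{2}$. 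Choosing the vertex $\gamma:=C_{2}>0$ we get $\mathrm{Re}(Wf,f)_{\mathfrak{H}}-\gamma\geq C_{0}t>0$, hence
$$
\frac{|\mathrm{Im}(Wf,f)_{\mathfrak{H}}|}{\mathrm{Re}(Wf,f)_{\mathfrak{H}}-\gamma}\leq\frac{(C_{1}+C_{3})\,t}{C_{0}\,t}=\frac{C_{1}+C_{3}}{C_{0}}=:\tan\theta,
$$
so $\theta<\pi/2$ and $\Theta(W)\subset\mathfrak{L}_{\gamma}(\theta)$. The one place where the hypotheses genuinely interact is the mixed-norm bound iv) for $A$: its factor $\|f\|_{\mathfrak{H}_{+}}$ produces the sublinear term $C_{3}\sqrt{t}$, and it is precisely the embedding inequality \eqref{2.2} (giving $t\geq1$) that lets one absorb $\sqrt{t}$ into $t$ and keep the quotient bounded uniformly in $f$. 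This is the only subtle estimate; everything else is a direct application of \eqref{2.3}.

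It remains to pass from $\mathfrak{M}$ to the closures. For $\tilde{W}$ I would take $f\in\mathrm{D}(\tilde{W})$ with $\|f\|_{\mathfrak{H}}=1$ and a sequence $f_{n}\in\mathfrak{M}$ with $f_{n}\to f$ and $Wf_{n}\to\tilde{W}f$ in $\mathfrak{H}$; then $(Wf_{n},f_{n})_{\mathfrak{H}}/\|f_{n}\|^{2}_{\mathfrak{H}}\to(\tilde{W}f,f)_{\mathfrak{H}}$, and since every term lies in the \emph{closed} sector $\mathfrak{L}_{\gamma}(\theta)$, so does the limit, whence $\Theta(\tilde{W})\subset\mathfrak{L}_{\gamma}(\theta)$. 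For the formal adjoint, using $\mathrm{D}(W^{+})=\mathrm{D}(W)=\mathfrak{M}$ together with the granted inclusion $\mathrm{D}(W)\subset\mathrm{D}(W^{\ast})$, one has $(W^{+}f,f)_{\mathfrak{H}}=(W^{\ast}f,f)_{\mathfrak{H}}=(f,Wf)_{\mathfrak{H}}=\overline{(Wf,f)_{\mathfrak{H}}}$, so $\Theta(W^{+})=\overline{\Theta(W)}$; because $\mathfrak{L}_{\gamma}(\theta)$ has a real vertex it is symmetric about the real axis, and therefore $\Theta(W^{+})\subset\mathfrak{L}_{\gamma}(\theta)$ as well. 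Repeating the limiting argument yields $\Theta(\tilde{W}^{+})\subset\mathfrak{L}_{\gamma}(\theta)$, so both $\tilde{W}$ and $\tilde{W}^{+}$ have the same positive sector. The only care needed here is the normalization in the limit passage (dividing by $\|f_{n}\|^{2}_{\mathfrak{H}}\to1$) and the use of the conjugation symmetry of a real-vertex sector for the adjoint.
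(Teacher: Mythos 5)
Your proposal is correct, and while the overall strategy (lower bound on $\mathrm{Re}(Wf,f)_{\mathfrak{H}}$, upper bound on $|\mathrm{Im}(Wf,f)_{\mathfrak{H}}|$, locate a sector with real positive vertex, pass to the closure) coincides with the paper's, the execution of the key estimate is genuinely different and simpler. The paper splits $f=u+iv$, estimates the cross terms $(Tv,u)-(Tu,v)$ and $(Av,u)-(Au,v)$ separately, and handles the mixed-norm term from condition iv) by Young's inequality with a free parameter $\varepsilon$, producing a vertex $\gamma(\varepsilon)=C_{2}-k(\varepsilon)C_{3}\varepsilon^{-1}/2$ whose sign must then be forced positive by solving for an admissible $\varepsilon$. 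You avoid both the $u+iv$ decomposition (using only $|\mathrm{Im}\,z|\leq|z|$ and the triangle inequality) and the parameter optimization: the embedding $\|f\|_{\mathfrak{H}}\leq\|f\|_{\mathfrak{H}_{+}}$ lets you absorb the sublinear factor $C_{3}\|f\|_{\mathfrak{H}_{+}}\|f\|_{\mathfrak{H}}\leq C_{3}\|f\|^{2}_{\mathfrak{H}_{+}}$ outright, and the vertex $\gamma=C_{2}>0$ comes for free from condition iii). What the paper's parametric argument buys is a sharper description of the admissible vertex--angle pairs (the explicit $\xi$ and $k(\varepsilon)$, which are quoted again in Theorems \ref{T3.7} and \ref{T3.8}); what yours buys is brevity and a transparent constant $\tan\theta=(C_{1}+C_{3})/C_{0}$. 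Your explicit conjugation argument $\Theta(W^{+})=\overline{\Theta(W)}$ for the formal adjoint is also cleaner than the paper's appeal to ``analogous reasoning,'' and it correctly uses that a sector with real vertex is invariant under complex conjugation.
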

\begin{proof}  Due to    inequalities   \eqref{2.2},\eqref{2.3}   we   conclude that the  operator $W$ is strictly accretive, i.e.
\begin{equation}\label{3.1}
\mathrm{Re}(Wf,f)_{\mathfrak{H} }\geq C_{0} \|f\|^{2}_{\mathfrak{H} } ,\;f\in \mathrm{D}(W).
\end{equation}
 Let us prove that the operator $\tilde{W}$ is canonical sectorial. Combining   \eqref{2.3}  (ii) and  \eqref{2.3} (iii), we get
\begin{equation}\label{3.2}
\mathrm{Re}(Wf,f)_{\mathfrak{H}}= \mathrm{Re}(Tf,f)_{\mathfrak{H}}+\mathrm{Re}(Af,f)_{\mathfrak{H}}\geq  C_{0}\|f\|_{\mathfrak{H}_{+}}+C_{2}\|f\|_{\mathfrak{H}},\,f\in \mathrm{D}(W) .
\end{equation}
  Obviously    we can extend the previous  inequality  to
\begin{equation}\label{3.3}
\mathrm{Re}(\tilde{W}f,f)_{\mathfrak{H}} \geq  C_{0}\|f\|_{\mathfrak{H}_{+}}+C_{2}\|f\|_{\mathfrak{H}} ,\,f\in \mathrm{D}(\tilde{W}).
\end{equation}
By virtue of \eqref{3.3}, we obtain  $\mathrm{D}(\tilde{W})\subset\mathfrak{H}_{+}. $
Note that we have the estimate
\begin{equation*}
|\mathrm{Im}(Wf,f)_{\mathfrak{H}}|\leq \left|\mathrm{Im}(Tf,f)_{\mathfrak{H}}\right|+\left|\mathrm{Im}(Af,f)_{\mathfrak{H}}\right|=
I_{1}+I_{2},\,f\in \mathrm{D}(W) .
\end{equation*}
Using        inequality \eqref{2.3} (ii),  the    Jung  inequality,   we get
$$
I_{1}=\left| (Tv,u)_{\mathfrak{H}}- (Tu,v)_{\mathfrak{H}}\right|\leq  \left| (Tv,u)_{\mathfrak{H}}\right|+\left| (Tu,v)_{\mathfrak{H}}\right|\leq   2 C_{1}\|u\|_{\mathfrak{H}_{+}}\|v\|_{\mathfrak{H}_{+}}\leq  C_{1} \|f\|^{2}_{\mathfrak{H}_{+}},
$$
where $f=u+i\, v.$ Consider $I_{2}.$
Applying  the  Cauchy Schwartz inequality and inequality \eqref{2.3} (iv),  we obtain for arbitrary positive $\varepsilon$
$$
 \left| (Av,u)_{\mathfrak{H}} \right|\leq C_{3}\|v\|_{\mathfrak{H}_{+}}\|u\|_{\mathfrak{H}} \leq
\frac{C_{3}}{2}\left\{\frac{1}{\varepsilon}\|u\|^{2}_{\mathfrak{H}}+\varepsilon\|v\|^{2}_{\mathfrak{H}_{+}}\right\}\,;
$$
$$
 \left| (Au,v)_{\mathfrak{H}}\right|\leq
\frac{C_{3}}{2}\left\{\frac{1}{\varepsilon}\|v\|^{2}_{\mathfrak{H}}+\varepsilon\|u\|^{2}_{\mathfrak{H}_{+}}\right\}.
$$
 Hence
$$
I_{2}=\left| (Av,u)_{\mathfrak{H}} -(Au,v)_{\mathfrak{H}}\right|\leq \left| (Av,u)_{\mathfrak{H}}| +|(Au,v)_{\mathfrak{H}}\right|\leq \frac{C_{3}}{2}\left\{\frac{1}{\varepsilon}\|f\|^{2}_{\mathfrak{H}}+\varepsilon\|f\|^{2}_{\mathfrak{H}_{+}}\right\}.
$$
  Finally,   we have the following estimate
$$
|\mathrm{Im}(Wf,f)_{\mathfrak{H}}|  \leq  \frac{C_{3}}{2}\, \varepsilon^{-1} \|f\|^{2}_{\mathfrak{H}}+\left(\frac{C_{3}}{2}\,\varepsilon+C_{1}\right)\|f\|^{2}_{\mathfrak{H}_{+}} ,\,f\in \mathrm{D}(W).
 $$
Thus,  we   conclude that      the next inequality holds for   arbitrary $k>0$
\begin{equation*}
{\rm Re}( Wf, f    )_{\mathfrak{H}}-k \left|{\rm Im} ( Wf, f    )_{\mathfrak{H}}\right|\geq
$$
$$
\geq\left[C_{0}-k\left(\frac{C_{3}}{2}\,\varepsilon+C_{1}\right)\right] \|f\|^{2}_{\mathfrak{H}_{+}} +\left(C_{2}-k\,\frac{C_{3}}{2}\, \varepsilon^{-1}\right)\|f\|^{2}_{\mathfrak{H}},\,f\in \mathrm{D}(W) .
\end{equation*}
  Using the continuity property of the inner product, we can extend the previous  inequality  to  the set  $\mathrm{D}(\tilde{W}).$
It follows easily that
\begin{equation}\label{3.4}
\left|{\rm Im} \left( [\tilde{W}-\gamma(\varepsilon)]f, f    \right)_{\mathfrak{H}}\right|   \leq \frac{1}{k(\varepsilon)}
{\rm Re}\left( [\tilde{W}-\gamma(\varepsilon)]f, f    \right)_{\mathfrak{H}}\! ,\,f\in \mathrm{D}(\tilde{W}) ,
$$
$$
k(\varepsilon)= C_{0}\left(\frac{ C_{3} }{2} \,\varepsilon+ C_{1}\right)^{-1},
\;\gamma(\varepsilon)=C_{2}-  k(\varepsilon) \, \frac{  C_{3} }{2}\,  \varepsilon^{-1} .
\end{equation}
The previous  inequality implies that the numerical range   of the operator $ \tilde{W} $ belongs to the sector $\mathfrak{L}_{\gamma}(\theta)$ with the vertex situated   at the point  $\gamma$ and the semi-angle $\theta=\arctan(1/k).$
Solving system of equations  \eqref{3.4} relative to $\varepsilon$ we obtain  the positive root $\xi$ corresponding to  the value $\gamma=0$ and the following description  for the coordinates of  the   sector vertex $\gamma$
\begin{equation*}
\gamma:=\left\{ \begin{aligned}
  \gamma<0,\;\varepsilon\in(0,\xi)  ,\\
  \gamma\geq 0,\; \varepsilon\in [\xi,\infty)   \\
\end{aligned}
 \right., \xi=\sqrt{\left(\frac{C_{1}}{ C_{3}}\right)^{2}+ \frac{C_{0}}{C_{2}}   }-\frac{C_{1}}{ C_{3}}.
\end{equation*}
 It follows that the operator $\tilde{W}$ has a positive sector.   The proof corresponding to    the operator  $ \tilde{W}^{+} $ follows   from the  reasoning  given  above if we note  that   $W ^{+}$    is formal adjoint with respect to   $W.$
  \end{proof}
  \begin{lemma}\label{L3.2}
  The operators $\tilde{W},\tilde{W}^{+}$   are m-accretive, their  resolvent sets contain  the half-plane $\{\zeta:\,\zeta\in \mathbb{C},\,\mathrm{Re}\,\zeta<C_{0}\}.$
\end{lemma}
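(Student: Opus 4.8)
The plan is to derive both assertions from strict accretivity combined with a closed-range argument. First I would record what Lemma~\ref{L3.1} already provides: extending \eqref{3.3} to the closure and using \eqref{2.2}, the operator $\tilde{W}$ is strictly accretive, $\mathrm{Re}(\tilde{W}f,f)_{\mathfrak{H}}\geq C_{0}\|f\|^{2}_{\mathfrak{H}}$ for $f\in\mathrm{D}(\tilde{W})$, and the very same reasoning applied to the formal adjoint gives $\mathrm{Re}(\tilde{W}^{+}g,g)_{\mathfrak{H}}\geq C_{0}\|g\|^{2}_{\mathfrak{H}}$ for $g\in\mathrm{D}(\tilde{W}^{+})$. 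Fixing $\zeta$ with $\mathrm{Re}\,\zeta<C_{0}$ and writing $\mathrm{Re}\big((\tilde{W}-\zeta)f,f\big)_{\mathfrak{H}}\geq(C_{0}-\mathrm{Re}\,\zeta)\|f\|^{2}_{\mathfrak{H}}$, the Cauchy--Schwarz inequality yields the lower bound $\|(\tilde{W}-\zeta)f\|_{\mathfrak{H}}\geq(C_{0}-\mathrm{Re}\,\zeta)\|f\|_{\mathfrak{H}}$. Since $\tilde{W}$ is closed, this makes $\tilde{W}-\zeta$ injective with closed range, and its inverse on that range is bounded by $(C_{0}-\mathrm{Re}\,\zeta)^{-1}$.

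The substantive step is surjectivity, i.e.\ $\mathrm{R}(\tilde{W}-\zeta)=\mathfrak{H}$. As the range is closed, it is enough to prove it is dense, equivalently that $\mathrm{R}(\tilde{W}-\zeta)^{\perp}=\mathrm{N}(\tilde{W}^{\ast}-\bar\zeta)=\{0\}$. Here I would exploit that $\tilde{W}^{\ast}$ extends the strictly accretive operator $\tilde{W}^{+}$: if $g$ annihilates the range, then $(\tilde{W}^{\ast}-\bar\zeta)g=0$, whence $\mathrm{Re}(\tilde{W}^{\ast}g,g)_{\mathfrak{H}}=\mathrm{Re}\,\zeta\,\|g\|^{2}_{\mathfrak{H}}$; comparing this with the strict accretivity estimate $\mathrm{Re}(\tilde{W}^{+}g,g)_{\mathfrak{H}}\geq C_{0}\|g\|^{2}_{\mathfrak{H}}$ forces $\mathrm{Re}\,\zeta\geq C_{0}$, contradicting $\mathrm{Re}\,\zeta<C_{0}$ unless $g=0$. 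Consequently $\tilde{W}-\zeta$ is boundedly invertible, so $\{\zeta:\mathrm{Re}\,\zeta<C_{0}\}\subset\mathrm{P}(\tilde{W})$.

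Finally, m-accretivity follows by specialising the same bound: for $\mathrm{Re}\,\zeta>0$ the point $-\zeta$ satisfies $\mathrm{Re}(-\zeta)<0<C_{0}$, hence $(\tilde{W}+\zeta)^{-1}\in\mathcal{B}(\mathfrak{H})$ with $\|(\tilde{W}+\zeta)^{-1}\|\leq(C_{0}+\mathrm{Re}\,\zeta)^{-1}\leq(\mathrm{Re}\,\zeta)^{-1}$, which is exactly the defining estimate for an m-accretive operator, while accretivity itself is immediate from $\mathrm{Re}(\tilde{W}f,f)_{\mathfrak{H}}\geq 0$. The corresponding statement for $\tilde{W}^{+}$ is obtained by interchanging the roles of $\tilde{W}$ and $\tilde{W}^{+}$. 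I expect the surjectivity step to be the main obstacle: one must ensure that every element of $\mathrm{R}(\tilde{W}-\zeta)^{\perp}$ actually lies in a domain on which strict accretivity of the adjoint can be invoked, i.e.\ that $\mathrm{N}(\tilde{W}^{\ast}-\bar\zeta)$ is captured by the closed formal adjoint $\tilde{W}^{+}$ and not by some larger part of $\mathrm{D}(\tilde{W}^{\ast})$. Verifying this identification, so that $\tilde{W}^{\ast}$ inherits accretivity, is where conditions \eqref{2.3} must be used in full.
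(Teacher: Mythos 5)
Your overall skeleton (lower bound from accretivity, closed range from closedness, then kill the orthogonal complement of the range) is the right shape, and your norm estimate $\|(\tilde{W}+\zeta)^{-1}\|\leq(C_{0}+\mathrm{Re}\,\zeta)^{-1}\leq(\mathrm{Re}\,\zeta)^{-1}$ matches the paper's. But the step you yourself flag as ``the main obstacle'' is not a technical verification you can defer --- it is the entire content of the lemma, and your plan for it is circular. For a closed, densely defined, accretive operator $A$, the statement ``$A^{\ast}$ is accretive'' is \emph{equivalent} to ``$A$ is m-accretive'': if $A$ fails to be m-accretive then $\mathrm{N}(A^{\ast}+1)\neq\{0\}$, and any nonzero $g$ there satisfies $\mathrm{Re}(A^{\ast}g,g)_{\mathfrak{H}}=-\|g\|^{2}_{\mathfrak{H}}<0$. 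So ``verifying that $\tilde{W}^{\ast}$ inherits accretivity from $\tilde{W}^{+}$'' is not a lemma feeding into the proof; it \emph{is} the proof, and nothing in your sketch supplies it. In general $\mathrm{D}(\tilde{W}^{\ast})$ is strictly larger than $\mathrm{D}(\tilde{W}^{+})$, and there is no a priori reason an element of $\mathrm{N}(\tilde{W}^{\ast}-\bar\zeta)$ should lie in the smaller domain; establishing $\tilde{W}^{\ast}=\tilde{W}^{+}$ would require the sesquilinear-form machinery (essentially the representation-theorem argument the paper only deploys later, in Theorem \ref{T3.4}).

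The paper closes this gap by a different mechanism, which you should adopt: Lemma \ref{L3.1} shows the numerical range of $\tilde{W}$ lies in a sector $\mathfrak{L}_{\gamma}(\theta)$ with $\gamma>0$ and $\theta<\pi/2$, whose complement in $\mathbb{C}$ is \emph{connected}; Kato's Theorem 3.2 (p.~268) then gives that $\mathrm{R}(\tilde{W}-\zeta)$ is closed and the deficiency $\mathrm{def}(\tilde{W}-\zeta)$ is \emph{constant} on that complement. This reduces surjectivity for all $\zeta$ with $\mathrm{Re}\,\zeta<C_{0}$ to computing the deficiency at a single convenient point $\zeta_{0}$ with $\mathrm{Re}\,\zeta_{0}<0$, which is done directly from the accretivity estimate \eqref{3.5} without ever touching $\tilde{W}^{\ast}$. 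That stability-of-the-index argument is the ingredient your proposal is missing; without it (or an equivalent form-theoretic identification of $\tilde{W}^{\ast}$), the surjectivity step does not go through.
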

\begin{proof}    Due to Lemma \ref{L3.1}  we know that the operator $\tilde{W}$ has a positive sector,  i.e. the numerical range of    $\tilde{W}$   belongs to the  sector
 $\mathfrak{L}_{\gamma}(\theta),\,\gamma>0.$  In consequence of Theorem 3.2   \cite[p.268]{firstab_lit:kato1980},
 we have   $\forall\zeta\in \mathbb{C}\setminus\mathfrak{L}_{\gamma}(\theta),$ the set  $\mathrm{R}(\tilde{W}-\zeta)$ is a closed  space,   and the next relation     holds
\begin{equation*}
  {\rm def}(\tilde{W}-\zeta)=\eta,\; \eta={\rm const} .
 \end{equation*}
Due to  Theorem 3.2   \cite[p.268]{firstab_lit:kato1980} the inverse operator $(\tilde{W}+\zeta)^{-1}$ is defined  on the subspace $\mathrm{R}(\tilde{W}+\zeta),\,{\rm Re}\zeta>0.$
In accordance with the definition of m-accretive operator  given in the monograph   \cite[p.279]{firstab_lit:kato1980}   we need to show  that
\begin{equation*}
{\rm def}(\tilde{W}+\zeta)=0,\;\|(\tilde{W}+\zeta)^{-1}\|\leq ({\rm Re}\zeta)^{-1},\,{\rm Re}\zeta>0.
\end{equation*}
  For this purpose assume that
 $\zeta_{0} \in\mathbb{C}\setminus \mathfrak{L}_{\gamma}(\theta),\;{\rm Re}\zeta_{0}  <0.$
Using  \eqref{3.1}, we get
 \begin{equation}\label{3.5}
  {\rm Re}  \left( f,[\tilde{W}-\zeta_{0} ]f  \right)_{\!\!\mathfrak{H}}\!\!\geq  (C_{0}- {\rm Re} \zeta_{0} ) \|f\|^{2}_{\mathfrak{H}},\;f\in \mathrm{D}(\tilde{W}).
 \end{equation}
Since the  operator $\tilde{W}-\zeta_{0}$    has the closed range    $\mathrm{R} (\tilde{W}-\zeta_{0}),$ it follows that
\begin{equation*}
 \mathfrak{H}=\mathrm{R} (\tilde{W}-\zeta_{0})\oplus \mathrm{R} (\tilde{W}-\zeta_{0})^{\perp} .
 \end{equation*}
Note that the  intersection of the  sets  $  \mathfrak{M}$ and $\mathrm{R} (\tilde{W}-\zeta_{0})^{\perp}$ is   zero. If we assume   otherwise, then applying  inequality  \eqref{3.5}   for any element
 $u\in \mathfrak{M}\cap \mathrm{R}  (\tilde{W}-\zeta_{0})^{\perp}$    we get
 \begin{equation*}
(C_{0}-{\rm Re}\zeta_{0}) \|u\|^{2}_{\mathfrak{H}} \leq {\rm Re} \left( u,[\tilde{W}-\zeta_{0}]u  \right)_{\mathfrak{H}}=0,
 \end{equation*}
hence $u=0.$ Thus  the intersection of the sets  $  \mathfrak{M}$ and $\mathrm{R} (\tilde{W}-\zeta_{0})^{\perp}$ is   zero. It implies that
$$
\left(g,v\right)_{\mathfrak{H}}=0,\;\forall g\in  \mathrm{R}  (\tilde{W}-\zeta_{0})^{\perp},\;\forall v\in \mathfrak{M}.
$$
Since $ \mathfrak{M}$   is    a dense set in $\mathfrak{H}_{+},$  then   taking into account  \eqref{2.2}, we obtain that      $ \mathfrak{M}$  is  a  dense  set  in $\mathfrak{H}.$ Hence    $\mathrm{R}  (\tilde{W}-\zeta_{0})^{\perp}=0,\,{\rm def} (\tilde{W}-\zeta_{0}) =0.$ Combining this fact with Theorem 3.2   \cite[p.268]{firstab_lit:kato1980}, we get   ${\rm def} (\tilde{W}-\zeta )=0,\;\zeta\in \mathbb{C}\setminus\mathfrak{L}_{\gamma}(\theta).$  It is clear that
${\rm def} (\tilde{W}+\zeta )=0,\,\forall\zeta,\,{\rm Re}\zeta>0.$  Let us prove that $\|(\tilde{W}+\zeta)^{-1}\|\leq ({\rm Re}\zeta)^{-1},\,\forall\zeta,\,{\rm Re}\zeta>0.$
  We must   notice that
 \begin{equation*}
(C_{0}+{\rm Re}\zeta ) \|f\|^{2}_{\mathfrak{H}} \leq {\rm Re} \left( f,[\tilde{W}+\zeta ]f  \right)_{\mathfrak{H}}\leq \|f\|_{\mathfrak{H}}\|(\tilde{W}+\zeta )f\|_{\mathfrak{H}},\;f\in \mathrm{D}(\tilde{W}),\;
{\rm Re}\zeta>0 .
 \end{equation*}
By virtue of the fact   ${\rm def} (\tilde{W}+\zeta )=0,\;\forall\zeta,\,{\rm Re}\zeta>0$  we   know   that the  resolvent   is   defined. Therefore
 \begin{equation*}
\|(\tilde{W}+\zeta )^{-1}f\|_{\mathfrak{H}}     \leq(C_{0}+{\rm Re}\,\zeta ) ^{-1} \|f\|_{\mathfrak{H}}\leq ( {\rm Re}\,\zeta ) ^{-1} \|f\|_{\mathfrak{H}},\;f\in \mathfrak{H}.
 \end{equation*}
 It implies that
$$
\|(\tilde{W}+\zeta )^{-1} \| \leq( {\rm Re}\,\zeta ) ^{-1},\;\forall\zeta,\,{\rm Re}\zeta>0.
$$
If we combine inequality \eqref{3.3}  with     Theorem 3.2 \cite[p.268]{firstab_lit:kato1980}, we get    $\mathrm{P}(\tilde{W})\supset\{\zeta:\,\zeta\in \mathbb{C},\,\mathrm{Re}\,\zeta<C_{0}\}.$
The  proof corresponding to the operator  $\tilde{W}^{+}$    is absolutely analogous.
\end{proof}

 \begin{lemma}\label{L3.3}
 The operator $\tilde{W}_{\mathfrak{R}}$ is strictly accretive,  m-accretive,  selfadjoint.
\end{lemma}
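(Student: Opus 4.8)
The plan is to verify the three asserted properties in turn, the unifying observation being that the quadratic form of $W_{\mathfrak{R}}$ is real-valued. First I would note that for $f\in\mathfrak{M}$ one has $(W_{\mathfrak{R}}f,f)_{\mathfrak{H}}=\mathrm{Re}(Wf,f)_{\mathfrak{H}}$, so the numerical range of $W_{\mathfrak{R}}$ lies on the real axis; combined with the relation $(W_{\mathfrak{R}}f,g)_{\mathfrak{H}}=(f,W_{\mathfrak{R}}g)_{\mathfrak{H}}$, which is a short computation from $\mathrm{D}(W)\subset\mathrm{D}(W^{\ast})$ and the definition of the formal adjoint, this shows that $W_{\mathfrak{R}}$ is symmetric, hence so is its closure $\tilde{W}_{\mathfrak{R}}$. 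Strict accretivity is then immediate from \eqref{3.1}: since $\mathrm{Re}(W_{\mathfrak{R}}f,f)_{\mathfrak{H}}=\mathrm{Re}(Wf,f)_{\mathfrak{H}}\geq C_{0}\|f\|^{2}_{\mathfrak{H}}$, the estimate extends to $\mathrm{D}(\tilde{W}_{\mathfrak{R}})$ by the same continuity argument already used in Lemma \ref{L3.1}.

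Next I would establish m-accretivity by transferring the proof of Lemma \ref{L3.2} to $W_{\mathfrak{R}}$. The point is that $W_{\mathfrak{R}}$ inherits the coercivity of $W$ from \eqref{2.3}, while its numerical range now lies in the degenerate sector $[C_{0},\infty)$ (semi-angle $\theta=0$). Consequently Theorem 3.2 of \cite[p.268]{firstab_lit:kato1980} applies: for every $\zeta_{0}$ with $\mathrm{Re}\,\zeta_{0}<C_{0}$ the operator $\tilde{W}_{\mathfrak{R}}-\zeta_{0}$ has closed range and constant deficiency, closedness following from the coercive bound $\|(\tilde{W}_{\mathfrak{R}}-\zeta_{0})f\|_{\mathfrak{H}}\geq(C_{0}-\mathrm{Re}\,\zeta_{0})\|f\|_{\mathfrak{H}}$. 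Repeating the density argument of Lemma \ref{L3.2} — the intersection $\mathfrak{M}\cap\mathrm{R}(\tilde{W}_{\mathfrak{R}}-\zeta_{0})^{\perp}$ is trivial by strict accretivity, and $\mathfrak{M}$ is dense in $\mathfrak{H}$ via \eqref{2.2} — I would conclude $\mathrm{def}(\tilde{W}_{\mathfrak{R}}-\zeta_{0})=0$, whereas the same coercive bound yields $\|(\tilde{W}_{\mathfrak{R}}+\zeta)^{-1}\|\leq(\mathrm{Re}\,\zeta)^{-1}$ for $\mathrm{Re}\,\zeta>0$. This shows $\tilde{W}_{\mathfrak{R}}$ is m-accretive and that its resolvent set contains $\{\zeta:\mathrm{Re}\,\zeta<C_{0}\}$.

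Finally, self-adjointness follows by combining the two previous facts, and this is the decisive step. Taking $\zeta=1$ in the m-accretivity conclusion gives $\mathrm{R}(\tilde{W}_{\mathfrak{R}}+1)=\mathfrak{H}$. For a symmetric operator this range condition forces $\mathrm{D}(\tilde{W}_{\mathfrak{R}}^{\ast})=\mathrm{D}(\tilde{W}_{\mathfrak{R}})$: given $g\in\mathrm{D}(\tilde{W}_{\mathfrak{R}}^{\ast})$, surjectivity provides $f\in\mathrm{D}(\tilde{W}_{\mathfrak{R}})$ with $(\tilde{W}_{\mathfrak{R}}^{\ast}+1)g=(\tilde{W}_{\mathfrak{R}}+1)f$, and since $f\in\mathrm{D}(\tilde{W}_{\mathfrak{R}})\subset\mathrm{D}(\tilde{W}_{\mathfrak{R}}^{\ast})$ acts the same under both operators, $g-f\in\mathrm{N}(\tilde{W}_{\mathfrak{R}}^{\ast}+1)=\mathrm{R}(\tilde{W}_{\mathfrak{R}}+1)^{\perp}=\{0\}$, whence $g=f$ and $\tilde{W}_{\mathfrak{R}}=\tilde{W}_{\mathfrak{R}}^{\ast}$.

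I expect the main obstacle to be the surjectivity step, namely showing $\mathrm{def}(\tilde{W}_{\mathfrak{R}}-\zeta_{0})=0$: Kato's theorem only supplies closed range and constant deficiency, and these must be supplemented by the density of $\mathfrak{M}$ in $\mathfrak{H}$ exactly as in Lemma \ref{L3.2}. Particular care is needed to confirm that the coercivity estimate, available a priori only on the core $\mathfrak{M}$, really passes to the full domain $\mathrm{D}(\tilde{W}_{\mathfrak{R}})$ of the closure. Once surjectivity is secured, the implication \emph{symmetric and m-accretive} $\Rightarrow$ \emph{self-adjoint} is the clean concluding argument.
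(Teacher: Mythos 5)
Your proof is correct and follows essentially the same route as the paper: symmetry of $W_{\mathfrak{R}}$ passed to the closure, strict accretivity from the coercivity estimate extended by continuity, m-accretivity by repeating the deficiency argument of Lemma \ref{L3.2}, and then self-adjointness. The only cosmetic difference is at the last step, where the paper invokes Theorem 3.16 of \cite[p.271]{firstab_lit:kato1980} from the relation $\mathrm{def}(\tilde{W}_{\mathfrak{R}}-\zeta)=0$ for $\mathrm{Im}\,\zeta\neq 0$, while you prove the implication \emph{symmetric and m-accretive} $\Rightarrow$ \emph{self-adjoint} directly from surjectivity of $\tilde{W}_{\mathfrak{R}}+1$.
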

\begin{proof}
  It is  obvious  that $  W _{\mathfrak{R}}  $ is a symmetric operator. Due to the continuity  property of the inner product we can conclude that
   $\tilde{W}_{\mathfrak{R}}  $ is   symmetric too.  Hence  $\Theta(\tilde{W}_{\mathfrak{R}})\subset \mathbb{R}.$ By virtue of  \eqref{3.2}, we have
 \begin{equation*}
(W_{\mathfrak{R}}f,f)_{\mathfrak{H}}\geq C_{0}\|f\|^{2}_{\mathfrak{H}_{+}},\,f\in \mathrm{D}(W).
\end{equation*}
Using inequality \eqref{2.2} and  the  continuity property of  the   inner product, we obtain
 \begin{equation}\label{3.6}
(\tilde{W}_{\mathfrak{R}}f,f)_{\mathfrak{H}}\geq C_{0}\|f\|^{2}_{\mathfrak{H}_{+}}\geq C_{0}\|f\|^{2}_{\mathfrak{H}},\,f\in \mathrm{D}(\tilde{W}_{\mathfrak{R}}).
\end{equation}
It implies that $\tilde{W}_{\mathfrak{R}}$ is strictly accretive.
  In the same way as in the proof of Lemma \ref{L3.2}   we   come  to conclusion that
$\tilde{W}_{\mathfrak{R}}$ is m-accretive. Moreover we obtain the   relation ${\rm def}(\tilde{W}_{\mathfrak{R}}-\zeta )=0,\,  {\rm Im }\zeta\neq 0 .$
Hence by virtue  of Theorem 3.16  \cite[p.271]{firstab_lit:kato1980} the operator $\tilde{W}_{\mathfrak{R}}$ is selfadjoint.
\end{proof}
 \begin{theorem}\label{T3.4}
 The  operators $\tilde{W}_{\mathfrak{R}},\tilde{W},\tilde{W}^{+}$   have    compact resolvents.
\end{theorem}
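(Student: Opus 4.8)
The plan is to factor the resolvent through the space $\mathfrak{H}_{+}$ and then to invoke the compactness of the embedding \eqref{2.1}. The starting observation is that each of the three operators has $0$ in its resolvent set: for $\tilde{W},\tilde{W}^{+}$ this follows from Lemma \ref{L3.2} since $\mathrm{Re}\,0=0<C_{0}$, and for $\tilde{W}_{\mathfrak{R}}$ it follows from Lemma \ref{L3.3} together with the strict accretivity bound \eqref{3.6}. Hence the resolvent at $\zeta=0$ coincides with the (bounded) inverse operator, is defined on all of $\mathfrak{H}$, and maps $\mathfrak{H}$ into the domain of the operator; by \eqref{3.3} (respectively \eqref{3.6}) this domain is contained in $\mathfrak{H}_{+}$, so the relevant $\mathfrak{H}_{+}$-norms below are meaningful.

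The key step is to bound the resolvent as an operator acting \emph{from} $\mathfrak{H}$ \emph{into} $\mathfrak{H}_{+}$. First I would take an arbitrary $g\in\mathfrak{H}$ and set $f:=R_{\tilde{W}}g$, so that $\tilde{W}f=g$ and $f\in\mathrm{D}(\tilde{W})\subset\mathfrak{H}_{+}$. Discarding the nonnegative $C_{2}$-term in \eqref{3.3} and applying the Cauchy--Schwarz inequality followed by \eqref{2.2}, I obtain
\begin{equation*}
C_{0}\|f\|^{2}_{\mathfrak{H}_{+}}\leq \mathrm{Re}(\tilde{W}f,f)_{\mathfrak{H}}=\mathrm{Re}(g,f)_{\mathfrak{H}}\leq \|g\|_{\mathfrak{H}}\|f\|_{\mathfrak{H}}\leq \|g\|_{\mathfrak{H}}\|f\|_{\mathfrak{H}_{+}},
\end{equation*}
whence $\|R_{\tilde{W}}g\|_{\mathfrak{H}_{+}}\leq C_{0}^{-1}\|g\|_{\mathfrak{H}}$. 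Thus $R_{\tilde{W}}\colon\mathfrak{H}\to\mathfrak{H}_{+}$ is bounded. Writing $J$ for the embedding operator of \eqref{2.1}, the resolvent regarded as an operator in $\mathfrak{H}$ is the composition $J\circ R_{\tilde{W}}$ of a bounded map into $\mathfrak{H}_{+}$ with the compact map $J$, and is therefore compact.

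The arguments for $\tilde{W}^{+}$ and $\tilde{W}_{\mathfrak{R}}$ have the same form: for $\tilde{W}^{+}$ one notes that $\mathrm{D}(W)\subset\mathrm{D}(W^{\ast})$ gives $(W^{+}f,f)_{\mathfrak{H}}=\overline{(Wf,f)_{\mathfrak{H}}}$ on $\mathfrak{M}$, so the coercivity bound \eqref{3.3} transfers verbatim to $\tilde{W}^{+}$, while for $\tilde{W}_{\mathfrak{R}}$ one uses \eqref{3.6} directly. Finally, compactness of the resolvent at the single point $\zeta=0$ propagates to every point of the resolvent set through the resolvent identity, which expresses $R_{\tilde{W}}(\zeta)$ as a product involving the compact operator $R_{\tilde{W}}$. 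I do not expect a genuine obstacle here; the only points that require care are the inclusion $\mathrm{D}(\tilde{W})\subset\mathfrak{H}_{+}$, already secured by \eqref{3.3}, and the fact that the coercive estimate survives passage to the closures, which is handled by the continuity of the inner product exactly as in the preceding lemmas.
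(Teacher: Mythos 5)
Your proof is correct, but it follows a genuinely different route from the paper. You factor the resolvent through $\mathfrak{H}_{+}$: the coercivity estimate \eqref{3.3} together with $\tilde{W}f=g$ gives $C_{0}\|R_{\tilde{W}}g\|^{2}_{\mathfrak{H}_{+}}\leq\mathrm{Re}(g,R_{\tilde{W}}g)_{\mathfrak{H}}\leq\|g\|_{\mathfrak{H}}\|R_{\tilde{W}}g\|_{\mathfrak{H}_{+}}$, so $R_{\tilde{W}}$ is bounded from $\mathfrak{H}$ into $\mathfrak{H}_{+}$ and compactness follows from the compact embedding \eqref{2.1}; the identity $(W^{+}f,f)_{\mathfrak{H}}=\overline{(Wf,f)_{\mathfrak{H}}}$ and the bound \eqref{3.6} let the same two lines cover $\tilde{W}^{+}$ and $\tilde{W}_{\mathfrak{R}}$, and the first resolvent identity propagates compactness from $\zeta=0$ to the whole resolvent set. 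This is shorter, uniform over the three operators, and relies only on Lemmas \ref{L3.2}--\ref{L3.3} for the invertibility at $0$. The paper instead treats $\tilde{W}_{\mathfrak{R}}$ via Mikhlin's discrete-spectrum theorem (bounded sets in the energetic space of $H$ are bounded in $\mathfrak{H}_{+}$, hence compact in $\mathfrak{H}$) and then transfers compactness to $\tilde{W}$ through the sesquilinear-form machinery: it proves $\mathfrak{Re}\,\tilde{\mathfrak{t}}=\tilde{\mathfrak{h}}$, identifies $\tilde{W}_{\mathfrak{R}}=Re\,\tilde{W}$ as the real part in the sense of forms, and invokes Kato's Theorem 3.3 (p.~337), which states that an m-sectorial operator has compact resolvent if and only if its real part does. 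What the paper's longer route buys is precisely the identity $H=Re\,\tilde{W}$, which is not a by-product of your argument but is indispensable later: the representation \eqref{3.10} in Theorem \ref{T3.5} rests on it. So your proof fully establishes Theorem \ref{T3.4}, but if it replaced the paper's proof the form-theoretic identification would have to be supplied separately before Theorem \ref{T3.5}.
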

\begin{proof}
First note that due to Lemma \ref{L3.3} the operator $\tilde{W}_{\mathfrak{R}}$  is selfadjoint.  Using \eqref{3.6},  we obtain    the estimates
$$
\|f\|_{ H }\geq  \sqrt{C_{0} }\|f\|_{\mathfrak{H}_{+}}\geq \sqrt{C_{0} } \|f\|_{\mathfrak{H} }, \;f\in \mathfrak{H}_{H},
$$
where $H:=\tilde{W}_{\mathfrak{R}}.$
Since  $\mathfrak{H}_{+}\hookrightarrow\hookrightarrow\mathfrak{H} ,$ then  we   conclude that each   set   bounded     with respect to the energetic norm generated by the operator $\tilde{W}_{\mathfrak{R}}$ is  compact with respect to the norm $\|\cdot\|_{\mathfrak{H}}.$     Hence in accordance  with Theorem   \cite[p.216]{firstab_lit:mihlin1970} we conclude that   $\tilde{W}_{\mathfrak{R}}$ has a discrete spectrum.  Note that  in consequence of  Theorem 5 \cite[p.222]{firstab_lit:mihlin1970} we have that    a selfadjoint strictly accretive    operator with  discrete spectrum has a compact inverse operator. Thus using Lemma \ref{L3.3}, Theorem 6.29 \cite[p.187]{firstab_lit:kato1980} we obtain  that $\tilde{W}_{\mathfrak{R}}$ has a compact resolvent.

 Further, we need the  technique of the   sesquilinear form theory  stated    in \cite{firstab_lit:kato1980}.
Consider the sesquilinear forms
$$ \mathfrak{t} [f,g]=(\tilde{W}f,g)_{\mathfrak{H}},\,f,g\in \mathrm{D}(\tilde{W}),\;  \mathfrak{h} [f,g]=(\tilde{W}_{\mathfrak{R}}f,g)_{\mathfrak{H}},\,f,g\in \mathrm{D}(\tilde{W}_{\mathfrak{R}}).
$$
 Recall  that due to   inequality \eqref{3.3} we came to the  conclusion that $\mathrm{D}(\tilde{W})\subset \mathfrak{H}_{+}.$ In the same way  we can deduce  that  $\mathrm{D}(\tilde{W}_{\mathfrak{R}})\subset \mathfrak{H}_{+}.$
By virtue   of   Lemma  \ref{L3.1},  Lemma \ref{L3.3}, it is easy to prove that the   sesquilinear forms $\mathfrak{t},\mathfrak{h}$  are sectorial. Applying   Theorem 1.27 \cite[p.318]{firstab_lit:kato1980}  we get that these  forms  are closable. Now note that   $\mathfrak{Re}\, \tilde{\mathfrak{t}}$   is a sum of two closed sectorial forms. Hence   in consequence of Theorem 1.31 \cite[p.319]{firstab_lit:kato1980},  we have that   $\mathfrak{Re}\,\tilde{\mathfrak{t}}$   is a closed form. Let us show that $\mathfrak{Re}\,\tilde{\mathfrak{t}}=\tilde{\mathfrak{h}}.$    First  note that this equality is true   on the  elements of the linear manifold $\mathfrak{M} \subset\mathfrak{H}_{+}.$   This  fact can be obtained  from the following obvious  relations
$$\tilde{\mathfrak{t}}  [f,g]=(  W   f,g)_{\mathfrak{H}},
\;  \overline{\tilde{\mathfrak{t}}[g,f]} = ( W ^{+}\! f,g)_{\mathfrak{H}},\,f,g\in \mathfrak{M}.
$$
On the other hand
$$\tilde{\mathfrak{h}}[f,g]=(\tilde{W}_{\mathfrak{R}}f,g)_{\mathfrak{H}}=(W_{\mathfrak{R}}f,g)_{\mathfrak{H}},\,f,g\in \mathfrak{M}.
$$
Hence
\begin{equation}\label{3.7}
\mathfrak{Re}\,\tilde{\mathfrak{t}}[f,g]=\tilde{\mathfrak{h}}[f,g],\;f,g\in \mathfrak{M}.
\end{equation}
 Using  \eqref{2.3}, we get
\begin{equation}\label{3.8}
 C_{0}\|f\|^{2}_{\mathfrak{H}_{+}}\leq \mathrm{Re}\,\tilde{\mathfrak{t}}[f] \leq C_{4} \|f\|^{2}_{\mathfrak{H}_{+}},\,C_{0} \|f\|^{2}_{\mathfrak{H}_{+}}\leq\tilde{\mathfrak{h}}[f]\leq  C_{4}\|f\|^{2}_{\mathfrak{H}_{+}},\,f\in \mathfrak{M},
\end{equation}
where $C_{4}=C_{1}+C_{3}.$  Since $ \mathfrak{Re} \,\tilde{\mathfrak{t}}[f]=\mathrm{ Re } \,\tilde{\mathfrak{t}}[f],\,f\in \mathfrak{M},$   the sesquilinear forms  $\mathfrak{Re} \,\tilde{\mathfrak{t}},\tilde{\mathfrak{h}}$ are closed forms, then using \eqref{3.8} it is easy to prove that $\mathrm{D}(\mathfrak{Re}\,\tilde{\mathfrak{t}})=\mathrm{D}(\tilde{\mathfrak{h}})=\mathfrak{H}_{+}.$ Using   estimates \eqref{3.8},   it is not hard to prove  that   $\mathfrak{M}$ is a core of the forms $\mathfrak{Re} \,\tilde{\mathfrak{t}},\tilde{\mathfrak{h}}.$ Hence using \eqref{3.7}, we obtain
$\mathfrak{Re}\,\tilde{\mathfrak{t}}[f] =\tilde{\mathfrak{h}}[f],\,f\in \mathfrak{H}_{+}.$ In accordance with    the   polarization principle (see (1.1) \cite[p.309]{firstab_lit:kato1980}),  we have     $\mathfrak{Re}\,\tilde{\mathfrak{t}}=\tilde{\mathfrak{h}}.$
Now   recall      that the     forms
  $\tilde{\mathfrak{t}},\tilde{\mathfrak{h}}$ are generated    by the operators  $\tilde{W}, \tilde{W}_{\mathfrak{R}}$ respectively. Note that    in consequence of Lemmas \ref{L3.1}- \ref{L3.3}  these operators   are m-sectorial. Hence     by virtue  of    Theorem 2.9 \cite[p.326]{firstab_lit:kato1980}, we get  $T_{\tilde{\mathfrak{t}}}=\tilde{W},  T_{\tilde{\mathfrak{h}}}=\tilde{W}_{\mathfrak{R}} .$    Since we have proved that  $\mathfrak{Re}\,\tilde{\mathfrak{t}}=\tilde{\mathfrak{h}}$, then $T_{\mathfrak{Re}\,\tilde{\mathfrak{t}}}=\tilde{W}_{\mathfrak{R}}.$ Therefore   by definition   we have  that the operator $\tilde{W}_{\mathfrak{R}}$ is  the real part of the m-sectorial operator $\tilde{W},$   by symbol $\tilde{W}_{\mathfrak{R}}=Re\, \tilde{W}.$   Since we proved above  that $\tilde{W}_{\mathfrak{R}}$ has a compact resolvent, then
  using  Theorem 3.3 \cite[p.337]{firstab_lit:kato1980}  we conclude that the  operator $\tilde{W}$ has a compact resolvent. The proof corresponding   to the operator $\tilde{W}^{+}$ is absolutely  analogous.
\end{proof}

\begin{theorem}\label{T3.5}
The following two-sided estimate holds
\begin{equation}\label{3.9}
 \left\| S  \right\|^{-2}\lambda_{i}(R_{H})\leq \lambda_{i}\left(V\right)\leq  \left\|S^{- 1}      \right\|\, \lambda_{i}(R_{H}),\,i\in \mathbb{N} ,
\end{equation}
where  $H:=\tilde{W}_{\mathfrak{R}},$ $V:=\left(R_{\tilde{W}}\right)_{\mathfrak{R}},$ and $S$ is   a bounded   selfadjoint   operator   defined by  the  operator $W.$
 \end{theorem}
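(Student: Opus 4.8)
The plan is to obtain an explicit factorization of $\tilde{W}$ through its real part $H=\tilde{W}_{\mathfrak{R}}$ and then read off the comparison of eigenvalues from a scalar two-sided operator estimate. By Theorem \ref{T3.4} both $R_{H}=H^{-1}$ and $V$ are compact, and by Lemma \ref{L3.3} the operator $H$ is selfadjoint and strictly accretive, so $R_{H}$ is a positive compact selfadjoint operator with a well-defined non-increasing sequence of eigenvalues; the same is true of $V$. First I would invoke the representation theory of sectorial forms in \cite{firstab_lit:kato1980}: since $\mathfrak{Re}\,\tilde{\mathfrak{t}}=\tilde{\mathfrak{h}}$ is the form of $H$ with $\mathrm{D}(H^{1/2})=\mathfrak{H}_{+}$ (this identification being a consequence of \eqref{3.8}), and since the imaginary form $\mathfrak{Im}\,\tilde{\mathfrak{t}}$ is bounded relative to $\tilde{\mathfrak{h}}$ by the positive-sector property of Lemma \ref{L3.1} (with vertex chosen at the origin, $|\mathfrak{Im}\,\tilde{\mathfrak{t}}[u]|\leq\tan\theta\cdot\tilde{\mathfrak{h}}[u]$), there is a bounded selfadjoint operator $G$ on $\mathfrak{H}$, $\|G\|\leq\tan\theta$, with $\tilde{\mathfrak{t}}[u,v]=\bigl((I+iG)H^{1/2}u,H^{1/2}v\bigr)_{\mathfrak{H}}$, that is, $\tilde{W}=H^{1/2}(I+iG)H^{1/2}$.

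Next I would invert this factorization. Since $I+iG$ is boundedly invertible, $R_{\tilde{W}}=H^{-1/2}(I+iG)^{-1}H^{-1/2}$, and taking the real component of the bounded operator $R_{\tilde{W}}$ and using that $H$ and $G$ are selfadjoint gives $V=H^{-1/2}\,\mathfrak{Re}\,(I+iG)^{-1}\,H^{-1/2}$. A short computation with the spectral calculus of $G$ yields $\mathfrak{Re}\,(I+iG)^{-1}=\tfrac12\bigl[(I+iG)^{-1}+(I-iG)^{-1}\bigr]=(I+G^{2})^{-1}$, so that $V=H^{-1/2}(I+G^{2})^{-1}H^{-1/2}$. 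This is exactly the point where the operator $S$ enters: I would set $S:=(I+G^{2})^{1/2}$, a bounded selfadjoint operator manifestly determined by $W$ through its imaginary part, giving the clean identity $V=H^{-1/2}S^{-2}H^{-1/2}$.

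Finally I would convert this into the eigenvalue estimate by a sandwich argument. Since $S^{-2}$ is positive selfadjoint, $\|S\|^{-2}I\leq S^{-2}\leq\|S^{-1}\|^{2}I$; conjugating by the positive selfadjoint factor $H^{-1/2}$ preserves operator order, whence $\|S\|^{-2}R_{H}\leq V\leq\|S^{-1}\|^{2}R_{H}$. The monotonicity of the eigenvalues of positive compact selfadjoint operators under this order relation, which is immediate from the Courant--Fischer max--min formula together with $\lambda_{i}(cA)=c\,\lambda_{i}(A)$ for $c>0$, then gives \eqref{3.9}; since $S\geq I$ one has $\|S^{-1}\|\leq 1$, so $\|S^{-1}\|^{2}\leq\|S^{-1}\|$ and the stated upper constant follows a fortiori. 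The main obstacle is the first step: one must justify rigorously that the imaginary form is $\tilde{\mathfrak{h}}$-bounded and representable by a \emph{bounded} selfadjoint $G$, i.e. that $\mathrm{D}(H^{1/2})=\mathfrak{H}_{+}$ and the factorization $\tilde{W}=H^{1/2}(I+iG)H^{1/2}$ is available; once this is in place, the remaining manipulations with bounded operators and the min--max principle are routine.
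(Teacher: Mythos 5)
Your proposal follows essentially the same route as the paper: both rest on Kato's representation $\tilde{W}=H^{1/2}(I+iB)H^{1/2}$ of an m-sectorial operator through its real part (Theorem 3.2, Kato p.~337), the resulting identity $V=H^{-1/2}(I+B^{2})^{-1}H^{-1/2}$, and the comparison of eigenvalues of positive compact selfadjoint operators via the min--max principle (Lemma 1.1 of Gohberg--Krein). The one point where you genuinely streamline is obtaining $R_{\tilde{W}}^{\ast}=H^{-1/2}(I-iB)^{-1}H^{-1/2}$ by simply adjointing the bounded factorized resolvent, whereas the paper introduces a second representation $\tilde{W}^{+}=H^{1/2}(I+iB_{2})H^{1/2}$ and devotes a substantial argument (via $\mathrm{N}(H^{1/2})=0$ and the core property of $\mathfrak{M}$) to proving $B_{1}=-B_{2}$; your shortcut is valid, and your choice $S=(I+B^{2})^{1/2}$ instead of the paper's $S=I+B^{2}$ is immaterial since the theorem leaves $S$ unspecified.
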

\begin{proof}
    It was shown in the proof of  Theorem \ref{T3.4} that $H=Re\, \tilde{W}.$   Hence  in consequence   of Lemma \ref{L3.1}, Lemma \ref{L3.2},   Theorem 3.2 \cite[p.337]{firstab_lit:kato1980}  there exist  the selfadjoint   operators  $B_{i}:=\{B_{i}\in\mathcal{B} (\mathfrak{H}),\,\|B_{i}\|\leq \tan \theta \},\,i=1,2 $
(where $\theta$ is the  semi-angle of the sector $\mathfrak{L}_{0}(\theta)\supset \Theta(\tilde{W})$) such  that
\begin{equation}\label{3.10}
\tilde{W}=H^{\frac{1}{2}}(I+i B_{1}) H^{\frac{1}{2}},\;\tilde{W}^{+}=H^{\frac{1}{2}}(I+i B_{2}) H^{\frac{1}{2}}.
\end{equation}
Since the set of  linear operators  generates  ring,   it follows that
\begin{equation*}
  Hf\! =\!\frac{1}{2}\left[H^{\frac{1}{2}}(I+i B_{1})  +H^{\frac{1}{2}}(I+i B_{2})\right]H^{\frac{1}{2}}\!  =
  $$
  $$
  =\! \frac{1}{2}\left\{H^{\frac{1}{2}}\left[(I+i B_{1})  + (I+i B_{2})\right]\right\}H^{\frac{1}{2}}\!=
$$
$$
= \! H f +
 \frac{i}{2} H^{\frac{1}{2}}\left(B_{1}+B_{2}\right)  H^{\frac{1}{2}}f  ,\;f\in \mathfrak{M}.
\end{equation*}
 Consequently
\begin{equation}\label{3.11}
H^{\frac{1}{2}}\left(B_{1}+B_{2}\right) H^{\frac{1}{2}}f=0  ,\;f\in \mathfrak{M}.
\end{equation}
Let us show that $B_{1}=-B_{2}.$
 In  accordance with  Lemma \ref{L3.3} the operator $H$ is m-accretive, hence we have
$
(H+\zeta)^{-1}\in  \mathcal{B }(\mathfrak{H}),\,\mathrm{Re}\,\zeta>0.
$
Using this fact, we get
\begin{equation}\label{3.12}
{\rm Re}\left([H+\zeta]^{-1}Hf,f\right)_{\mathfrak{H}}={\rm Re}\left([H+\zeta]^{-1}[H+\zeta]  f,f\right)_{\mathfrak{H}}-{\rm Re}\left(\zeta\,[H+\zeta]^{-1}   f,f\right)_{\mathfrak{H}}\geq
$$
$$
\geq \|f\|^{2}_{\mathfrak{H}}-|\zeta|\cdot\|(H+\zeta)^{-1}\|\cdot\|f\|^{2}_{\mathfrak{H}}=\|f\|^{2}_{\mathfrak{H}} \left(1-|\zeta|\cdot \|(H+\zeta)^{-1}\|\right),
$$
$$
\,\mathrm{Re}\,\zeta>0,\,f\in \mathrm{D}(H).
\end{equation}
Applying    inequality \eqref{3.6},   we obtain
$$
\| f\| _{\mathfrak{H}}\|(H+\zeta)^{-1}f\| _{\mathfrak{H}}\geq|(f,[H+\zeta]^{-1}f)|
 \geq ( {\rm Re} \zeta+C_{0} )\|(H+\zeta)^{-1}f\|^{2}_{\mathfrak{H}},\;f\in \mathfrak{H}.
$$
It implies that
$$
\|(H+\zeta)^{-1}\|\leq ({\rm Re}\zeta +C_{0}  )^{-1},\;{\rm Re}\zeta>0.
$$
  Combining    this  estimate  and  \eqref{3.12}, we have
$$
{\rm Re}\left([H+\zeta]^{-1}Hf,f\right)_{\!\mathfrak{H}}\geq \|f\|_{\mathfrak{H}}^{2}\left(1-\frac{|\zeta|}{{\rm Re}\zeta +C_{0}}\right),\,{\rm Re}\zeta>0,\,f\in \mathrm{D}(H).
$$
Applying    formula (3.45) \cite[p.282]{firstab_lit:kato1980} and  taking into account that $H^{\frac{1}{2}}$ is selfadjoint, we get
\begin{equation}\label{3.13}
 \left(H^{\frac{1}{2} }f,f\right)_{\!\mathfrak{H}}=\frac{1}{\pi}\int\limits_{0}^{\infty}\zeta^{-1/2}{\rm Re}\left([H+\zeta]^{-1}Hf,f\right)_{\!\mathfrak{H}}d\zeta\geq
$$
$$
 \geq\|f\|^{2}_{\mathfrak{H}}\cdot \frac{C_{0}}{\pi }\int\limits_{0}^{\infty}\frac{\zeta^{-1/2}}{\zeta+C_{0}}  d\zeta=
 \sqrt{C_{0}}  \|f\|^{2}_{\mathfrak{H}},\,f\in  \mathrm{D} (H) .
\end{equation}
Since in accordance with   Theorem 3.35 \cite[p.281]{firstab_lit:kato1980} the set      $\mathrm{D}(H)$ is  a core of the operator $H^{ \frac{1}{2}},$    then we can extend \eqref{3.13} to
  \begin{equation}\label{3.14}
 \left(H^{\frac{1}{2}}f,f\right)_{\!\!\mathfrak{H}}\geq \sqrt{C_{0}} \|f\|^{2}_{\mathfrak{H}},\;f\in \mathrm{D} (H^{\frac{1 }{2}}).
\end{equation}
Hence $\mathrm{N}(H^{\frac{1}{2} })=0.$ Combining this fact and  \eqref{3.11}, we obtain
\begin{equation}\label{3.15}
 \left(B_{1}+B_{2}\right)  H^{ \frac{1}{2}}f=0  ,\;f\in \mathfrak{M}.
\end{equation}
Let us show that the set $\mathfrak{M}$ is a core of the operator $H^{ \frac{1}{2}}.$ Note that due to Theorem 3.35 \cite[p.281]{firstab_lit:kato1980}
 the operator  $H^{ \frac{1}{2}}$
is selfadjoint and $\mathrm{D}(H)$ is a core of the  operator $H^{ \frac{1}{2}}.$ Hence   we have the representation
\begin{equation}\label{3.16}
 \|H^{ \frac{1}{2}}f \|^{2}_{\mathfrak{H}}=(Hf,f)_{ \mathfrak{H}},\,f\in \mathrm{D}(H).
\end{equation}
To achieve    our aim, it is
sufficient  to show  the following
\begin{equation}\label{3.17}
\forall\,f_{0}\in \mathrm{D}(H^{ \frac{1}{2}}),\,\exists\, \{f_{n}\}_{1}^{\infty}\subset \mathfrak{M}:\; f_{n}\stackrel{\mathfrak{H}}{\longrightarrow} f_{0},\,H^{ \frac{1}{2}}f_{n}\stackrel{\mathfrak{H}}{\longrightarrow} H^{ \frac{1}{2}}f_{0}.
 \end{equation}
Since  in accordance with the definition the set $\mathfrak{M}$ is a core of $H$, then we can extend   second relation \eqref{3.8} to
$
\sqrt{C_{0}} \|f\| _{\mathfrak{H}_{+}}\leq (Hf,f)_{\mathfrak{H}}\leq \sqrt{C_{4}} \|f\| _{\mathfrak{H}_{+}},\,f\in \mathrm{D}(H).
$
Applying    \eqref{3.16}, we can write
\begin{equation}\label{3.18}
\sqrt{C_{0}} \|f\| _{\mathfrak{H}_{+}}\leq \|H^{ \frac{1}{2}}f \| _{\mathfrak{H}}\leq \sqrt{C_{4}} \|f\| _{\mathfrak{H}_{+}},\,f\in \mathrm{D}(H).
 \end{equation}
Using    lower estimate \eqref{3.18} and the fact that  $\mathrm{D}(H)$    is a  core of    $H^{ \frac{1}{2}},$ it is not hard to prove that  $\mathrm{D}(H^{   \frac{1}{2} })\subset\mathfrak{H}_{+}.$
Taking into account this fact and using         upper estimate \eqref{3.18}, we obtain \eqref{3.17}. It implies that  $\mathfrak{M}$ is a core of $H^{ \frac{1}{2}}.$ Note that  in accordance with    Theorem 3.35   \cite[p.281]{firstab_lit:kato1980} the operator $ H ^{ \frac{1}{2}} $ is m-accretive. Hence  combining  Theorem 3.2 \cite[p.268]{firstab_lit:kato1980}        with \eqref{3.14},
  we obtain   $\mathrm{R}(H ^{ \frac{1}{2}})=\mathfrak{H}.$ Taking into account that      $\mathfrak{M}$ is a  core   of the operator $H^{ \frac{1}{2}},$  we   conclude that
$\mathrm{R}(\check{H} ^{\frac{1}{2} })$ is dense in $\mathfrak{H},$ where $\check{H} ^{ \frac{1}{2} }$ is the  restriction of  the operator $H ^{ \frac{1}{2}}$ to $\mathfrak{M}.$ Finally, by virtue of \eqref{3.15}, we have that the sum   $B_{1}+B_{2}$   equal to zero   on the dense subset  of $\mathfrak{H}.$ Since these operators are   defined on $\mathfrak{H}$ and  bounded,  then   $B_{1}=-B_{2}.$   Further,   we    use the denotation  $B_{1}:=B.$

Note that due to Lemma \ref{L3.2} there exist the  operators $R_{\tilde{W}},R_{\tilde{W}^{+}}.$      Using the  properties  of the operator $B,$    we get
$\|(I\pm iB)f\|_{\mathfrak{H} }\|f\|_{\mathfrak{H} }\geq\mathrm{Re }\left([I\pm iB]f,f\right)_{\mathfrak{H}} =\|f\|^{2}_{\mathfrak{H}},\,f\in \mathfrak{H}.$ Hence
\begin{equation*}
\|(I\pm iB)f\|_{\mathfrak{H} } \geq \|f\|_{\mathfrak{H}},\,f\in \mathfrak{H}.
\end{equation*}
 It implies that the operators  $I\pm iB$ are invertible.
Since it was proved above that  $ \mathrm{R} (H^{ \frac{1}{2}})=\mathfrak{H},\,\mathrm{N}(H^{\frac{1}{2}})=0$,    then  there exists an operator $H^{-\frac{1}{2}}$ defined on $\mathfrak{H}.$
  Using    representation   \eqref{3.10}  and  taking into account  the   reasonings given above,  we obtain
\begin{equation}\label{3.19}
R_{\tilde{W}}=H^{-\frac{1 }{2}}(I+iB )^{-1} H^{- \frac{1}{2}},\;R_{\tilde{W}^{+}}=H^{-\frac{1 }{2}}(I-iB )^{-1} H^{- \frac{1}{2}}.
\end{equation}
Note that the following equality can be proved easily  $R^{\ast}_{ \tilde{W}}=R^{\,}_{\tilde{W}^{+}}.$ Hence  we have
\begin{equation}\label{3.20}
V=\frac{1}{2}\left(R_{\tilde{W}}+R_{\tilde{W}^{+}}\right).
\end{equation}
Combining  \eqref{3.19},\eqref{3.20},  we get
\begin{equation}\label{3.21}
V=\frac{1}{2}\,H^{-\frac{1 }{2}}\left[(I+iB )^{-1}+(I-iB )^{-1} \right]H^{- \frac{1}{2}}.
\end{equation}
 Using the obvious identity
$
(I+B^{2})=(I+iB ) (I-iB )= (I-iB )(I+iB ),
$
 by  direct calculation we   get
\begin{equation}\label{3.22}
(I+iB )^{-1}+(I-iB )^{-1}=(I+B^{2})^{-1}.
\end{equation}
Combining \eqref{3.21},\eqref{3.22}, we obtain
\begin{equation}\label{3.23}
V=\frac{1}{2}\,H^{-\frac{1 }{2}}  (I+B^{2} )^{-1}  H^{- \frac{1}{2}}.
\end{equation}
Let us evaluate the form $\left(V  f,f\right)_{\mathfrak{H}}.$  Note  that  there exists the operator $R_{H}$ (see Lemma \ref{L3.3}). Since  $H$ is selfadjoint (see Lemma \ref{L3.3}), then  due to Theorem 3 \cite[p.136]{firstab_lit: Ahiezer1966}     $R_{H}$ is selfadjoint.  It is clear that $R_{H}$  is positive because  $H$ is positive.   Hence by virtue of  the  well-known theorem (see \cite[p.174]{firstab_lit:Krasnoselskii M.A.})   there exists a unique  square root of the operator $ R_{H} ,$ the   selfadjoint operator $ \hat{R} $ such that
  $\hat{R} \hat{R}  =R_{H}.$     Using the    decomposition $H=H^{\frac{1}{2}}H^{\frac{1}{2}},$   we get   $H^{-\frac{1}{2}}H^{-\frac{1}{2}}H=I.$ Hence
$R_{H}\subset H^{-\frac{1}{2}}H^{-\frac{1}{2}},$ but   $ \mathrm{D} (R_{H})=\mathfrak{H}.$ It implies that $R_{H}=H^{-\frac{1}{2}}H^{-\frac{1}{2}}.$  Using  the  uniqueness property  of    square root  we   obtain       $H^{-\frac{1}{2}}= \hat{R}.$    Let us use the shorthand notation  $S:=I+B^{2}.$  Note that due to the obvious inequality   $\left(\|Sf\|_{\mathfrak{H}}  \geq\|f\|_{\mathfrak{H}},\,f\in \mathfrak{H}\right)$ the  operator $S^{-1}$ is bounded on the set $\mathrm{R}(S).$ Taking into account the reasoning given above, we get
 $$
\left(V  f,f\right)_{\mathfrak{H}}=\left(H^{-\frac{1 }{2}} S^{-1}     H^{- \frac{1}{2}}   f,f\right)_{\mathfrak{H}}=
\left( S^{-1}     H^{- \frac{1}{2}}   f,H^{-\frac{1 }{2}}f\right)_{\mathfrak{H}}\leq
$$
$$
\leq \|S^{-1}     H^{- \frac{1}{2}}   f \| _{\mathfrak{H}}  \|H^{- \frac{1}{2}}   f \|_{\mathfrak{H}}\leq   \|S^{- 1}\|  \cdot  \|   H^{- \frac{1}{2}}   f \|^{2}_{\mathfrak{H}}
=
  \|S^{- 1}       \|\cdot\left(R_{H }  f,f\right)_{\mathfrak{H}},\;f\in \mathfrak{H}.
$$
On the other hand,  it is easy to see that  $(S^{-1}f,f)_{\mathfrak{H}}\geq \|S^{-1}f\|^{2}_{\mathfrak{H}},\,f\in \mathrm{R}(S).$ At the same time   it is obvious that   $S$ is bounded and we have   $\|S^{-1}f\|_{\mathfrak{H}}\geq \|S\|^{-1} \|f\|_{\mathfrak{H}},\,f\in \mathrm{R}(S).$ Using   these estimates, we have
$$
\left(V f,f\right)_{\mathfrak{H}}=\left( S^{-1}     H^{- \frac{1}{2}}   f,H^{-\frac{1 }{2}}f\right)_{\mathfrak{H}}\geq  \|S^{-1}     H^{- \frac{1}{2}}   f \|^{2}_{\mathfrak{H}}\geq
$$
$$
\geq  \| S   \|^{-2} \cdot  \|   H^{- \frac{1}{2}}   f \|^{2}_{\mathfrak{H}}= \| S   \|^{-2}  \cdot\left(R_{H }  f,f\right)_{\mathfrak{H}},\;f\in \mathfrak{H}.
$$
  Note that due to Theorem \ref{T3.4} the operator $R_{H}$ is compact. Combining \eqref{3.20} with Theorem \ref{T3.4}, we get that the operator $V$ is compact.  Taking into account these facts and   using   Lemma 1.1 \cite[p.45]{firstab_lit:1Gohberg1965}, we obtain
 \eqref{3.9}.
\end{proof}
 \begin{remark}
  Since  it was proved  above that  $R_{H}$  is selfadjoint and positive, then we have  $\lambda_{i}(R_{H})=s_{i} (R_{H}),\,i\in \mathbb{N}.$
 Note that in accordance with the facts established above  the operator $H:=\tilde{W}_{\mathfrak{R}}$ has a discrete spectrum and a compact resolvent.
   Due to   results represented   in  \cite{firstab_lit:Rosenblum}, \cite{firstab_lit:1Agranovich2013}, \cite{firstab_lit:fedosov1964}, we have an opportunity to obtain
   order of the operator $H$  in an  easy   way    in most particular  cases.
  \end{remark}
The following   theorem   is  formulated in   terms of      order   $\mu:=\mu(H)$    and  devoted to the Schatten-von Neumann classification of     the  operator     $R_{\tilde{W}}.$

\begin{theorem}\label{T3.7} We have the following classification

\begin{equation*}
R_{\tilde{W}}\in  \mathfrak{S}_{p},\,p= \left\{ \begin{aligned}
\!l,\,l>2/\mu,\,\mu\leq1,\\
   1,\,\mu>1    \\
\end{aligned}
 \right.\;.
\end{equation*}
Moreover   under  the  assumption   $ \lambda_{n}(R_{H})\geq  C \,n^{-\mu},\,n\in \mathbb{N},$  we have
$$
 R_{ \tilde{W}}\in\mathfrak{S}_{p}\;  \Rightarrow \;\mu p>1,\;1\leq p<\infty,
$$
where $\mu:=\mu(H).$
 \end{theorem}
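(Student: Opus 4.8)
The plan is to extract the $\mathfrak{S}_p$-membership of $R_{\tilde{W}}$ directly from the factorization \eqref{3.19}, $R_{\tilde{W}}=H^{-1/2}(I+iB)^{-1}H^{-1/2}$, whose middle factor is bounded with $\|(I+iB)^{-1}\|\leq 1$ (shown in the proof of Theorem \ref{T3.5}) and whose outer factors are compact, selfadjoint and satisfy $s_n(H^{-1/2})=\lambda_n(H^{-1/2})=[\lambda_n(R_H)]^{1/2}$. Since $\mu=\mu(H)$ means $\lambda_n(R_H)\leq C n^{-\mu}$, this gives the single governing estimate $s_n(H^{-1/2})\leq C n^{-\mu/2}$.

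For the sufficiency part with $\mu\leq 1$ I would group the factorization as $R_{\tilde{W}}=\big[H^{-1/2}(I+iB)^{-1}\big]H^{-1/2}$ and apply $s_n(CD)\leq\|C\|\,s_n(D)$ with the bounded operator $C=H^{-1/2}(I+iB)^{-1}$, obtaining $s_n(R_{\tilde{W}})\leq C\,s_n(H^{-1/2})\leq C n^{-\mu/2}$; then $\sum_n s_n^{\,l}(R_{\tilde{W}})\leq C\sum_n n^{-\mu l/2}$ converges exactly when $l>2/\mu$, so $R_{\tilde{W}}\in\mathfrak{S}_l$. For $\mu>1$ this crude bound no longer reaches $p=1$, and here I would instead note that $\mu>1$ forces $\sum_n\lambda_n(R_H)<\infty$, hence $H^{-1/2}\in\mathfrak{S}_2$ because $\sum_n s_n^{\,2}(H^{-1/2})=\sum_n\lambda_n(R_H)<\infty$. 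Writing $R_{\tilde{W}}=H^{-1/2}\cdot\big[(I+iB)^{-1}H^{-1/2}\big]$ as a product of two Hilbert--Schmidt operators (the second factor lies in $\mathfrak{S}_2$ since a bounded operator times an $\mathfrak{S}_2$ operator remains in $\mathfrak{S}_2$), the inclusion $\mathfrak{S}_2\cdot\mathfrak{S}_2\subset\mathfrak{S}_1$ yields $R_{\tilde{W}}\in\mathfrak{S}_1$.

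For the necessity implication I would pass to the real component $V$. If $R_{\tilde{W}}\in\mathfrak{S}_p$ then, using $s_n(R_{\tilde{W}}^{\ast})=s_n(R_{\tilde{W}})$ and the identity $R_{\tilde{W}}^{\ast}=R_{\tilde{W}^{+}}$ noted before \eqref{3.20}, the adjoint lies in $\mathfrak{S}_p$ as well, so by the linear structure of $\mathfrak{S}_p$ and \eqref{3.20} we get $V=\tfrac{1}{2}(R_{\tilde{W}}+R_{\tilde{W}^{+}})\in\mathfrak{S}_p$. By Theorem \ref{T3.5} the operator $V$ is selfadjoint and positive, whence $s_n(V)=\lambda_n(V)$, and the lower estimate in \eqref{3.9} together with the hypothesis $\lambda_n(R_H)\geq C n^{-\mu}$ gives $\lambda_n(V)\geq\|S\|^{-2}\lambda_n(R_H)\geq C' n^{-\mu}$. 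Consequently $C''\sum_n n^{-\mu p}\leq\sum_n\lambda_n^{\,p}(V)<\infty$, which is possible only if $\mu p>1$.

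The delicate point is not any individual estimate but the coherence of the two regimes: the square root lost in $s_n(R_{\tilde{W}})\leq C[\lambda_n(R_H)]^{1/2}$ is exactly what produces the exponent $2/\mu$ in the sufficiency part, and this same loss is what prevents the crude argument from reaching $\mathfrak{S}_1$ when $1<\mu\leq 2$, forcing the separate Hilbert--Schmidt factorization for $\mu>1$. A secondary point requiring care is the positivity and selfadjointness of $V$, on which the identification $s_n(V)=\lambda_n(V)$ in the necessity part depends.
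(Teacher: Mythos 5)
Your proof is correct, but it takes a genuinely different route from the paper's at every stage. For $\mu\leq 1$ the paper does not return to the factorization \eqref{3.19}: it derives $\|R_{\tilde W}f\|^{2}_{\mathfrak H}\leq C_{0}^{-1}(Vf,f)_{\mathfrak H}$ from strict accretivity, compares the eigenvalues of $|R_{\tilde W}|^{2}$ with those of $V$ via Lemma 1.1 of Gohberg--Krein, and then applies the upper bound of \eqref{3.9}; your product inequality $s_{n}(CD)\leq\|C\|\,s_{n}(D)$ applied to \eqref{3.19} reaches the same bound $s_{n}(R_{\tilde W})\leq Cn^{-\mu/2}$ more directly, and both arguments lose the same square root (with the sharper Horn-type inequality $s_{2n-1}(CD)\leq s_{n}(C)s_{n}(D)$ your factorization would even give the threshold $l>1/\mu$, but that is more than the theorem asserts). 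For $\mu>1$ the paper invokes the matrix-trace criterion (Theorem 8.1 of Gohberg--Krein), controlling $\sum_{i}(R_{\tilde W}\psi_{i},\psi_{i})_{\mathfrak H}$ through the sectorial estimate on the imaginary part, whereas your observation that $H^{-1/2}=R_{H}^{1/2}\in\mathfrak S_{2}$ together with $\mathfrak S_{2}\cdot\mathfrak S_{2}\subset\mathfrak S_{1}$ is shorter and entirely standard. For the necessity the paper spends considerable effort showing that $V$ has a complete orthonormal system of eigenvectors (via the discreteness of the spectrum of $V^{-1}$) so as to apply inequality (7.9) of Gohberg--Krein to that particular basis; you bypass all of this by using that $\mathfrak S_{p}$ is a normed ideal for $1\leq p<\infty$, so that $V=\tfrac12(R_{\tilde W}+R_{\tilde W}^{\ast})\in\mathfrak S_{p}$ whenever $R_{\tilde W}\in\mathfrak S_{p}$, after which $s_{n}(V)=\lambda_{n}(V)$ and the lower bound of \eqref{3.9} finish the argument. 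Every ingredient you use ($\|(I+iB)^{-1}\|\leq 1$, $H^{-1/2}=R_{H}^{1/2}$, positivity and selfadjointness of $V$) is established in the proof of Theorem \ref{T3.5}, so your argument is complete; what the paper's longer detours buy is some additional structural information along the way (for instance the completeness of the eigenvector system of $V$), which your version does not reproduce but which is not needed for the statement itself.
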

\begin{proof}
  Consider the case $(\mu\leq1).$   Since we    already know  that   $R_{  \tilde{W }   }^{*}=R_{ \tilde{W}^{+} }^{\!},$ then it can easily be checked that
the operator $ R_{  \tilde{W }  }^{*}  R_{ \tilde{W }}^{\, } $   is a selfadjoint positive compact operator. Due to the well-known fact   \cite[p.174]{firstab_lit:Krasnoselskii M.A.} there exists the operator $|R_{\tilde{W }}  |.$
   By virtue  of Theorem 9.2 \cite[p.178]{firstab_lit:Krasnoselskii M.A.} the operator $|R_{\tilde{W }}  |$ is compact.
Since $\mathrm{N}(|R_{\tilde{W }}|^{2}) =0,$  it follows that $\mathrm{N}(|R_{\tilde{W }}|)=0.$ Hence      applying   Theorem \cite[p.189]{firstab_lit: Ahiezer1966}, we get that the operator
$|R_{\tilde{W }}|$ has an infinite  set of the eigenvalues.   Using  condition \eqref{2.3} (iii), we get
$$
{\rm Re}(R_{ \tilde{W}}f,f)_{\mathfrak{H}}\geq C_{0}\|R_{ \tilde{W}}f\|^{2}_{\mathfrak{H}},\;f\in \mathfrak{H}.
$$
Hence
$$
(|R_{\tilde{W }}|^{2} f,f)_{\mathfrak{H}}=\|R_{ \tilde{W}}f\|^{2}_{\mathfrak{H}}\leq C^{-1}_{0}{\rm Re}(R_{ \tilde{W}}f,f)_{\mathfrak{H}}= C^{-1}_{0}(V f,f)_{\mathfrak{H}},\,V:=  \left(R_{\tilde{W}}\right)_{\mathfrak{R}}.
$$
Since we already know that the operators $|R_{\tilde{W }}|^{2},V$ are compact, then  using  Lemma 1.1 \cite[p.45]{firstab_lit:1Gohberg1965}, Theorem \ref{T3.5},  we get
\begin{equation}\label{3.24}
 \lambda_{i} (|R_{\tilde{W }}|^{2} )\leq C^{-1}_{0} \,\lambda_{i}(V)\leq  C  i^{-\mu },\;i\in \mathbb{N}.
\end{equation}
   Recall  that by  definition  we have     $s_{i}(R_{ \tilde{W}})= \lambda_{ i }( |R_{\tilde{W }}|  ).$ Note that the operators $|R_{\tilde{W }}|,|R_{\tilde{W }}|^{2}$ have the  same eigenvectors.
This fact can be easily proved if we note   the obvious  relation
$
|R_{\tilde{W }}|^{2}f_{i}=|\lambda _{i} (|R_{\tilde{W }}|)|^{2} f_{i},\, i\in \mathbb{N}
$
and
   the  spectral representation for the  square root of a  selfadjoint positive compact operator
$$
|R_{\tilde{W }}|f=\sum\limits_{i=1}^{\infty}\sqrt{\lambda _{ i }(|R_{\tilde{W }}|^{2})}  \left(f,\varphi_{i}\right)  \varphi_{i}, \,f\in \mathfrak{H},
$$
where  $f_{i}\,, \varphi_{i}$ are  the  eigenvectors  of the  operators $|R_{\tilde{W }}|,|R_{\tilde{W }}|^{2}$ respectively  (see (10.25) \cite[p.201]{firstab_lit:Krasnoselskii M.A.}).  Hence
$ \lambda_{ i }( |R_{\tilde{W }}|  ) =
\sqrt{\lambda _{ i }( |R_{\tilde{W }}|^{2}  )} ,\,i\in \mathbb{N}.$
Combining this fact with  \eqref{3.24}, we get
$$
\sum\limits_{i=1}^{\infty}s^{p}_{i}(R_{ \tilde{W}})=
\sum\limits_{i=1}^{\infty}\lambda_{i}^{\frac{p}{2} }( |R_{\tilde{W }}|^{2}  )\leq C \sum\limits_{i=1}^{\infty} i^{-\frac{\mu p }{2}}.
$$
This completes the proof    for the case $(\mu\leq1).$

Consider the case $(\mu>1).$ It follows from
  \eqref{3.20} that  the  operator $V$ is positive     and   bounded. Hence   by virtue  of Lemma 8.1 \cite[p.126]{firstab_lit:1Gohberg1965}, we have that for any  orthonormal basis $\{\psi_{i}\}_{1}^{\infty}\subset \mathfrak{H}$ the following equalities  hold
\begin{equation}\label{3.25}
\sum\limits_{i=1}^{\infty}{\rm Re}(R_{\tilde{W}}\psi_{i},\psi_{i})_{\mathfrak{H}}=\sum\limits_{i=1}^{\infty}  (V \psi_{i},\psi_{i})_{\mathfrak{H}}=\sum\limits_{i=1}^{\infty}  (V\, \varphi_{i},\varphi_{i})_{\mathfrak{H}} ,
\end{equation}
where $\{\varphi_{i}\}_{1}^{\infty}$ is the orthonormal  basis of  the  eigenvectors of the operator $V.$
Due to  Theorem   \ref{T3.5}, we get
$$
  \sum\limits_{i=1}^{\infty}  (V \varphi_{i},\varphi_{i})_{\mathfrak{H}} =\sum\limits_{i=1}^{\infty}
 s_{i}(V) \leq C  \sum\limits_{i=1}^{\infty}  i^{-\mu }  .
$$
By virtue of  Lemma \ref{L3.1}, we get $ |{\rm Im} (R_{\tilde{W}}\psi_{i},\psi_{i})_{\mathfrak{H}}|\leq k^{-1} (\xi)\, {\rm Re}(R_{\tilde{W}}\psi_{i},\psi_{i})_{\mathfrak{H}}.$ Combining this fact with \eqref{3.25}, we get that  the    following series is convergent
$$
\sum\limits_{i=1}^{\infty} (R_{\tilde{W}}\psi_{i},\psi_{i})_{\mathfrak{H}}<\infty.
$$
Hence by definition \cite[p.125]{firstab_lit:1Gohberg1965} the operator $R_{\tilde{W}}$ has  a finite matrix trace.
 Using    Theorem 8.1 \cite[p.127]{firstab_lit:1Gohberg1965}, we get   $R_{ \tilde{W}}\in \mathfrak{S}_{1}.$ This completes the    proof for the case   $(\mu>1).$

Now, assume that  $ \lambda_{n}(R_{H})\geq  C \,n^{-\mu},\,n\in \mathbb{N},\,0\leq\mu<\infty.$     Let us show that the   operator $V$ has the complete orthonormal  system of the  eigenvectors.   Using   formula \eqref{3.23},   we get
$$
V^{\!^{-1}}=2H^ \frac{1 }{2}  (I+B^{2})      H^{ \frac{1}{2}},\; \mathrm{D} (V^{\!^{-1}})= \mathrm{R} (V).
$$
Let us prove that   $\mathrm{D}(V^{\!^{-1}})\subset \mathrm{D}(H).$    Note that   the set $\mathrm{D}(V^{^{\!-\!1}})$ consists  of the  elements $f+g,$ where $f\in \mathrm{D}(\tilde{W}),\,g\in \mathrm{D}(\tilde{W}^{+}).$    Using  representation \eqref{3.10}, it is easy to prove that
 $ \mathrm{D}(\tilde{W})\subset\mathrm{D}(H),\,\mathrm{D}(\tilde{W}^{+}) \subset\mathrm{D}(H) .$ This gives the desired result.
Taking into account the facts proven  above, we get
\begin{equation}\label{3.26}
 (V^{^{\!-\!1}}\!\!f,f)_{\mathfrak{H}}  =2(  S      H^{ \frac{1}{2}}f,H^ \frac{1 }{2}f)_{\mathfrak{H}}\geq 2\|H^ \frac{1 }{2} f\|^{2}_{\mathfrak{H}}=2(Hf,f)_{\mathfrak{H}}  ,\,f\in \mathrm{D}(V^{^{\!-\!1}}),
\end{equation}
where $S=I+B^{2}.$
 Since  $V$ is selfadjoint, then   due to  Theorem 3 \cite[p.136]{firstab_lit: Ahiezer1966} the operator    $V^{^{\!-\!1}}$ is selfadjoint.
  Combining  \eqref{3.26} with
 Lemma \ref{L3.3} we get that    $V^{^{\!-\!1}}$ is strictly accretive.
Using these facts   we can write
\begin{equation}\label{3.27}
\|f\| _{V^{^{ -\!1}}} \geq C\|f\|_{H  } ,\,f\in  \mathfrak{H}_{ V^{^{ -\!1}}} .
\end{equation}
Since  the operator $H$ has a discrete spectrum (see Theorem 5.3 \cite{firstab_lit:1kukushkin2018}), then any set  bounded   with respect to the      norm $\mathfrak{H}_{H}$ is a compact set   with respect to the norm    $\mathfrak{H}$ (see Theorem 4 \cite[p.220]{firstab_lit:mihlin1970}). Combining this fact with \eqref{3.27}, Theorem 3 \cite[p.216]{firstab_lit:mihlin1970}, we get
  that the operator $V^{^{\!-\!1}}$ has a discrete spectrum, i.e.   it has   the infinite set of  the eigenvalues
$\lambda_{1}\leq\lambda_{2}\leq...\leq\lambda_{i}\leq..., \, \lambda_{i} \rightarrow \infty  ,\,i\rightarrow \infty$ and the   complete orthonormal system of the  eigenvectors.
 Now  note that the operators $V,\,V^{^{\!-\!1}}$ have the same eigenvectors.  Therefore   the operator   $V$ has the  complete orthonormal  system of the  eigenvectors. Recall  that any  complete orthonormal system  is a  basis in  separable Hilbert space. Hence the complete orthonormal  system of the   eigenvectors of the operator $V$ is
  a basis in the space $\mathfrak{H}.$
 Let  $\{\varphi_{i}\}_{1}^{\infty}$ be the  complete orthonormal system  of the   eigenvectors of the  operator  $V$   and suppose    $
R_{ \tilde{W}}\in\mathfrak{S}_{p};$ then by virtue of   inequalities  (7.9) \cite[p.123]{firstab_lit:1Gohberg1965}, Theorem \ref{T3.5},  we get
$$
\sum\limits_{i=1}^{\infty}|s_{i} (R_{\tilde{W}} )|^{p}\geq\sum\limits_{i=1}^{\infty}| (R_{\tilde{W}}\varphi_{i},\varphi_{i})_{\mathfrak{H}}|^{p}\geq\sum\limits_{i=1}^{\infty}|{\rm Re}(R_{\tilde{W}}\varphi_{i},\varphi_{i})_{\mathfrak{H}}|^{p}=
$$
$$
=\sum\limits_{i=1}^{\infty}| (V \varphi_{i},\varphi_{i})_{\mathfrak{H}}|^{p}=\sum\limits_{i=1}^{\infty}
|\lambda_{i}(V)|^{p}\geq C   \sum\limits_{i=1}^{\infty}  i^{- \mu p }  .
$$
We claim   that $\mu p>1.$   Assuming the converse in the previous   inequality,  we come   to   contradiction with the condition  $R_{ \tilde{W}}\in\mathfrak{S}_{p}.$
This completes the proof.
\end{proof}

  The following theorem establishes  the  completeness property of  the  system  of   root vectors   of the operator $R_{\tilde{W}}.$

\begin{theorem}\label{T3.8}
Suppose  $\theta< \pi \mu/2;$   then the  system  of   root vectors of the operator $R_{ \tilde{W} }$ is complete, where $\theta$ is   the   semi-angle of the     sector $ \mathfrak{L}_{0}(\theta)\supset \Theta (\tilde{W}),\,\mu:=\mu(H).$
 \end{theorem}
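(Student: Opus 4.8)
The plan is to reduce the assertion to a classical completeness criterion of Keldysh--Matsaev type for a compact operator whose numerical range lies in a sector, with the exponent governing the criterion supplied by the order $\mu=\mu(H)$. First I would record that the root vectors of the compact operator $R_{\tilde{W}}$ coincide with those of $\tilde{W}$, and that $R_{\tilde{W}}$ is injective with dense range: by Lemma~\ref{L3.2} the point $0$ lies in $\mathrm{P}(\tilde{W})$ (since $\mathrm{Re}\,0<C_{0}$), so $R_{\tilde{W}}=R_{\tilde{W}}(0)$ is a bounded injective operator with range $\mathrm{D}(\tilde{W})$ dense in $\mathfrak{H}$. Consequently completeness of the root system in the closure of the range is the same as completeness in $\mathfrak{H}$.

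The first substantive step is to locate the numerical range of $R_{\tilde{W}}$. Using the factorization \eqref{3.19}, for $f\in\mathfrak{H}$ put $u=(I+iB)^{-1}H^{-1/2}f$; a direct computation then gives
\begin{equation*}
\left(R_{\tilde{W}}f,f\right)_{\mathfrak{H}}=\left((I+iB)^{-1}H^{-1/2}f,H^{-1/2}f\right)_{\mathfrak{H}}=\|u\|^{2}_{\mathfrak{H}}-i\,(Bu,u)_{\mathfrak{H}}.
\end{equation*}
Since $B=B^{\ast}$ with $\|B\|\leq\tan\theta$, the real part $\|u\|^{2}_{\mathfrak{H}}$ is strictly positive for $f\neq0$ (the operators $H^{-1/2}$ and $(I+iB)^{-1}$ being injective), while $|(Bu,u)_{\mathfrak{H}}|\leq\tan\theta\,\|u\|^{2}_{\mathfrak{H}}$. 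Hence $|\arg(R_{\tilde{W}}f,f)_{\mathfrak{H}}|\leq\theta$, so that $\Theta(R_{\tilde{W}})\subset\mathfrak{L}_{0}(\theta)$: the numerical range of the resolvent lies in the same sector of semi-angle $\theta$ as $\Theta(\tilde{W})$.

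Next I would supply the order information. By Theorem~\ref{T3.5} together with $\lambda_{n}(R_{H})\asymp n^{-\mu}$ (the order of the selfadjoint operator $H$), the eigenvalues of $V=\left(R_{\tilde{W}}\right)_{\mathfrak{R}}$ satisfy $\lambda_{n}(V)\asymp n^{-\mu}$; equivalently the factorization \eqref{3.19} together with the multiplicative inequality for singular numbers of products yields $s_{n}(R_{\tilde{W}})=O(n^{-\mu})$, whence $R_{\tilde{W}}\in\mathfrak{S}_{p}$ for every $p>1/\mu$. With a sector of semi-angle $\theta$ containing $\Theta(R_{\tilde{W}})$ and membership $R_{\tilde{W}}\in\mathfrak{S}_{p}$ in hand, I would invoke the sector form of the completeness theorem of the type in \cite{firstab_lit:1Gohberg1965}: a compact operator in $\mathfrak{S}_{p}$ whose numerical range lies in a sector of semi-angle strictly less than $\pi/(2p)$ has a complete system of root vectors. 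The hypothesis $\theta<\pi\mu/2$ is precisely what allows choosing an admissible exponent $p$ slightly larger than $1/\mu$ with $\theta<\pi/(2p)$, which closes the argument.

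The main obstacle I anticipate is the sharp bookkeeping between the order $\mu$ and the Schatten exponent: the crude estimate $s_{n}(R_{\tilde{W}})=O(n^{-\mu/2})$ used in Theorem~\ref{T3.7} would only deliver the weaker threshold $\theta<\pi\mu/4$, so one must exploit the tight estimate $s_{n}(R_{\tilde{W}})=O(n^{-\mu})$ (equivalently, work directly with $\lambda_{n}(V)\asymp n^{-\mu}$) to reach $\pi\mu/2$. A secondary technical point is to apply the classical completeness theorem in a version whose sector hypothesis refers to the full numerical range $\Theta(R_{\tilde{W}})$ verified above, rather than merely to the location of the eigenvalues, and to check that the strict inequality $\theta<\pi\mu/2$ genuinely leaves room to select the exponent $p$.
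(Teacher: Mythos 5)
Your argument is correct, but it runs along a genuinely different track from the paper's. The paper invokes Theorem 6.2 of \cite[p.305]{firstab_lit:1Gohberg1965}, whose decay hypothesis concerns the singular numbers of a \emph{rotated imaginary component}: choosing $\beta=\pi/2$ turns $\left(e^{i\beta}R_{\tilde{W}}\right)_{\mathfrak{I}}$ into $V=\left(R_{\tilde{W}}\right)_{\mathfrak{R}}$, and the required bound $s_{i}(V)=O(i^{-\mu})$ is exactly what Theorem \ref{T3.5} together with the definition of $\mu(H)$ already supplies; the only real work is showing that the angle $\vartheta(R_{\tilde{W}})$ of the cone generated by $\overline{\Theta(R_{\tilde{W}})}$ is strictly positive, so that condition (a) of that theorem is meaningful. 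You instead use the $\mathfrak{S}_{p}$-plus-sector criterion (the Theorem 6.1--type statement), which forces you to control $s_{n}(R_{\tilde{W}})$ itself; you correctly identify that the bound $O(n^{-\mu/2})$ implicit in Theorem \ref{T3.7} is insufficient and that one must extract $s_{n}(R_{\tilde{W}})=O(n^{-\mu})$ from the factorization \eqref{3.19} via the multiplicative (Ky Fan) inequality $s_{2n-1}(H^{-\frac{1}{2}}(I+iB)^{-1}H^{-\frac{1}{2}})\leq\|(I+iB)^{-1}\|\,s_{n}(H^{-\frac{1}{2}})^{2}=C\,\lambda_{n}(R_{H})$. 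That step is sound and is an estimate the paper never proves; as a by-product your route sharpens the classification of Theorem \ref{T3.7} to $R_{\tilde{W}}\in\mathfrak{S}_{p}$ for every $p>1/\mu$, while the paper's route has the advantage of consuming only the real-component information packaged in Theorem \ref{T3.5} (and of not needing any Schatten membership of $R_{\tilde{W}}$ beyond compactness), which is more in keeping with the paper's theme. Two small remarks: your sector bound for $\Theta(R_{\tilde{W}})$ via $u=(I+iB)^{-1}H^{-\frac{1}{2}}f$ is fine but duplicates what \eqref{3.28} (i.e.\ Lemma \ref{L3.1}) already gives; and the two-sided assertion $\lambda_{n}(R_{H})\asymp n^{-\mu}$ is neither guaranteed by the definition of the order nor needed --- only the upper bound enters your argument, and you should say so to avoid importing the extra hypothesis that appears only in the converse part of Theorem \ref{T3.7}.
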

\begin{proof}
Using    Lemma \ref{L3.1}, we have
\begin{equation}\label{3.28}
|{\rm Im} (R_{\tilde{W}}f,f)_{\mathfrak{H}}|\leq k^{-1}(\xi) \, {\rm Re}(R_{\tilde{W}}f,f)_{\mathfrak{H}},\,f\in \mathfrak{H}.
\end{equation}
Therefore   $ \overline{\Theta (R_{\tilde{W}})}\subset \mathfrak{L}_{0}(\theta).$  Note that the map
     $z: \mathbb{C}\rightarrow \mathbb{C},\;  z=1/\zeta$ takes each    eigenvalue  of the operator $R_{\tilde{W}}$  to the eigenvalue  of the operator $\tilde{W}.$ It is also clear    that   $z:\mathfrak{L}_{0}(\theta)\rightarrow \mathfrak{L}_{0}(\theta).$
  Using   the definition  \cite[p.302]{firstab_lit:1Gohberg1965} let us consider the following set
\begin{equation*}
   \mathfrak{P}:=\left\{z:\,z=t\, \xi,\,\xi\in  \overline{\Theta (R_{\tilde{W}})},\, 0\leq t<\infty\right\}.
\end{equation*}
It is easy to see  that $  \mathfrak{P}$ coincides with a closed sector of the complex plane  with the  vertex situated  at the point zero. Let us denote  by
 $\vartheta(R_{\tilde{W}})$ the  angle of this sector. It is obvious that $  \mathfrak{P}\subset \mathfrak{ L }_{0}(\theta).$ Therefore    $ 0 \leq\vartheta(R_{\tilde{W}}) \leq  2\theta.$ Let us prove that     $0 <\vartheta(R_{\tilde{W}}),$ i.e.    the strict inequality holds.    If we assume  that $\vartheta(R_{\tilde{W}})=0,$ then we get  $ e^{-i \mathrm{arg} z} =\varsigma,\,\forall z\in  \mathfrak{P}\setminus 0, $
 where $\varsigma$  is a constant independent on $z.$ In consequence of this fact   we have $\mathrm{Im}\, \Theta (\varsigma R_{\tilde{W}})=0.$ Hence the operator $\varsigma R_{\tilde{W}}$ is symmetric (see Problem  3.9 \cite[p.269]{firstab_lit:kato1980}) and by virtue of the fact   $ \mathrm{D} (\varsigma R_{\tilde{W}})=\mathfrak{H}$ one is selfadjoint. On the other hand, taking into account   the      equality  $R^{\ast}_{ \tilde{W}}=R^{\,}_{ \tilde{W}^{+}}$   (see the proof of   Theorem \ref{T3.5}), we have $(\varsigma R_{\tilde{W}}f,g)_{\mathfrak{H}}=( f,\bar{\varsigma} R_{\tilde{W}^{+}}g)_{\mathfrak{H}},\,  f,g\in \mathfrak{H}.$ Hence
 $\varsigma R_{\tilde{W}}=\bar{\varsigma} R_{\tilde{W}^{+}}.$ In the particular case we have $\forall f\in \mathfrak{H},\,\mathrm{Im}f=0:\, \mathrm{Re}\,\varsigma\,R_{\tilde{W}}f= \mathrm{Re}\,\varsigma\,R_{\tilde{W}^{+}}f,\,\mathrm{Im}\,\varsigma\,R_{\tilde{W}}f= -\mathrm{Im}\,\varsigma\,R_{\tilde{W}^{+}}f  .  $ It implies that  $\mathrm{N}(R_{\tilde{W}})\neq 0.$  This contradiction   concludes  the proof of the fact $ \vartheta(R_{\tilde{W}})>0.$
    Let us use   Theorem 6.2 \cite[p.305]{firstab_lit:1Gohberg1965}    according to  which   we have the following. If the following two conditions  (a) and (b) are fulfilled, then   the  system  of   root vectors  of the operator $R_{\tilde{W}}$ is complete.  \\

\noindent  a) $\vartheta(R_{\tilde{W}})=  \pi/ d ,$ where $d>1,$\\

\noindent b) for some $\beta,$ the  operator $B:=\left(e^{ i \beta}R_{\tilde{W}} \right)_{\mathfrak{I}}:\;s_{i}(B)=o(i^{-1/d }),\,i\rightarrow \infty .$\\

\noindent Let us show that conditions (a) and (b) are fulfilled. Note that due to Lemma \ref{L3.1} we have  $0\leq\theta<\pi/2.$
Hence $ 0<\vartheta(R_{\tilde{W}})<\pi.$ It implies that  there exists  $1<d<\infty$ such that
$\vartheta(R_{\tilde{W}})=  \pi/ d.$  Thus   condition  (a) is fulfilled.
  Let us choose the  certain value  $\beta=\pi/2$ in   condition (b) and notice  that $\left(e^{ i \pi/2}R_{\tilde{W}} \right)_{\mathfrak{I}}=\left( R_{\tilde{W}} \right)_{\mathfrak{R}}. $ Since the operator $V:=\left( R_{\tilde{W}} \right)_{\mathfrak{R}}$ is  selfadjoint, then we have   $s_{i}(V)=\lambda_{i}(V),\,i\in \mathbb{N}.$
  In consequence of Theorem \ref{T3.5}, we obtain
\begin{equation*}
 s_{i}(V)\, i^{1/d} = s_{i}(V)\, i^{ \mu} \cdot i^{1/d-\mu}    \leq C\cdot i^{1/d-\mu} ,\,i\in \mathbb{N}.
\end{equation*}
 Hence to achieve  condition  (b),      it is sufficient  to show that  $d>\mu^{-1}.$
By virtue of the conditions $ \vartheta(R_{\tilde{W}}) \leq 2 \theta,\,\theta< \pi \mu/2,$ we have $d=\pi/\vartheta(R_{\tilde{W}})\geq \pi/2\theta>\mu^{-1}.$ Hence we obtain  $s_{i}(V)=o(i^{-1/d }).$
 Since   both     conditions (a),(b) are fulfilled, then using   Theorem 6.2 \cite[p.305]{firstab_lit:1Gohberg1965} we complete the proof.
\end{proof}

  Proven Theorem \ref{T3.7} is  devoted to    the  description  of $s$-numbers    behavior  but     questions related with   asymptotic of the eigenvalues $\lambda_{i}(R_{\tilde{W}}),\,i\in \mathbb{N}$ are still relevant in our work.  It is a well-known fact  that for any  bounded  operator with the compact  imaginary component    there is a relationship between $s$-numbers of the  imaginary component  and the  eigenvalues    (see \cite {firstab_lit:1Gohberg1965}).  Similarly using  the information on    $s$-numbers of the real component,    we  can obtain  an asymptotic formula for the eigenvalues $\lambda_{i}(R_{\tilde{W}}),\,i\in \mathbb{N}.$      This idea is realized in the following theorem.
\begin{theorem}\label{T3.9}
 The following inequality   holds
 \begin{equation}\label{3.29}
\sum\limits_{i=1}^{n}|\lambda_{i}(R_{\tilde{W}})|^{p}\leq
  \sec ^{p} \theta \,\left\|S^{- 1}      \right\| \sum\limits_{i=1}^{n } \,\lambda^{ p}_{i}(R_{H}),
\end{equation}
$$
(n=1,2,...,\, \nu(R_{\tilde{W}})),\,1\leq p<\infty.
$$
Moreover   if  $\nu(R_{\tilde{W}})=\infty$ and the  order $\mu(H)\neq0,$  then  the following asymptotic formula  holds
\begin{equation}\label{3.30}
|\lambda_{i}(R_{\tilde{W}})|=  o\left(i^{-\mu+\varepsilon}    \right)\!,\,i\rightarrow \infty,\;\forall \varepsilon>0.
\end{equation}
\end{theorem}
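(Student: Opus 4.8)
The plan is to obtain \eqref{3.29} from three ingredients that are already available — the sectorial localization of the numerical range, a classical majorization relating the real parts of the eigenvalues of a compact operator to the eigenvalues of its real component, and the two-sided estimate of Theorem \ref{T3.5} — and then to read off the asymptotics \eqref{3.30} from \eqref{3.29} by a suitable choice of the exponent $p$.

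First I would record the sectorial bound. By \eqref{3.28} (equivalently, by Lemma \ref{L3.1}) we have $\overline{\Theta(R_{\tilde{W}})}\subset\mathfrak{L}_{0}(\theta)$, so every eigenvalue $\lambda$ of $R_{\tilde{W}}$ satisfies $|\arg\lambda|\leq\theta$, whence $|\lambda|\leq\sec\theta\,\mathrm{Re}\,\lambda$. Ordering the eigenvalues by decreasing modulus, this gives termwise $\sum_{i=1}^{n}|\lambda_{i}(R_{\tilde{W}})|^{p}\leq\sec^{p}\theta\sum_{i=1}^{n}\bigl(\mathrm{Re}\,\lambda_{i}(R_{\tilde{W}})\bigr)^{p}$, all summands being nonnegative since $R_{\tilde{W}}$ is accretive (as the inverse of the accretive operator $\tilde{W}$).

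Next comes the key step: relating $\mathrm{Re}\,\lambda_{i}(R_{\tilde{W}})$ to the eigenvalues of $V=(R_{\tilde{W}})_{\mathfrak{R}}$. Here I would invoke the classical inequality (Gohberg--Krein) stating that for a compact operator the sum of $\bigl(\mathrm{Re}\,\lambda_{i}\bigr)^{p}$, $p\geq1$, over any $n$ eigenvalues is dominated by $\sum_{i=1}^{n}s_{i}^{p}(\mathrm{Re}\,R_{\tilde{W}})$; since $V\geq0$ we have $s_{i}(V)=\lambda_{i}(V)$, so $\sum_{i=1}^{n}\bigl(\mathrm{Re}\,\lambda_{i}(R_{\tilde{W}})\bigr)^{p}\leq\sum_{i=1}^{n}\lambda_{i}^{p}(V)$. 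Combining this with the right-hand inequality of Theorem \ref{T3.5}, namely $\lambda_{i}(V)\leq\|S^{-1}\|\,\lambda_{i}(R_{H})$, and with the observation that $S=I+B^{2}\geq I$ forces $\|S^{-1}\|\leq1$ and hence $\|S^{-1}\|^{p}\leq\|S^{-1}\|$ for $p\geq1$, yields exactly \eqref{3.29}. I expect this majorization to be the main obstacle: one must reconcile the ordering by modulus used on the left with the ordering by real part underlying the Gohberg--Krein estimate, which is handled by remarking that the dominating bound holds for the sum taken over an arbitrary $n$-element set of eigenvalues, in particular over the $n$ of largest modulus.

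Finally, for \eqref{3.30} I would feed the order of $H$ into \eqref{3.29}. Since $\lambda_{i}(R_{H})=s_{i}(R_{H})\leq C\,i^{-\mu}$, we obtain $\sum_{i=1}^{n}|\lambda_{i}(R_{\tilde{W}})|^{p}\leq C\sum_{i=1}^{n}i^{-\mu p}$. Given $\varepsilon>0$, choose $p=(\mu-\varepsilon)^{-1}$, so that $\mu p=\mu/(\mu-\varepsilon)>1$ and the majorizing series converges; then $R_{\tilde{W}}\in\mathfrak{S}_{p}$, i.e. $\sum_{i=1}^{\infty}|\lambda_{i}(R_{\tilde{W}})|^{p}<\infty$. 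Monotonicity of $\{|\lambda_{i}|\}$ then gives $n\,|\lambda_{2n}|^{p}\leq\sum_{i=n+1}^{2n}|\lambda_{i}|^{p}\to0$, so $|\lambda_{2n}|^{p}=o(1/n)$ and therefore $|\lambda_{n}(R_{\tilde{W}})|=o(n^{-1/p})=o(n^{-\mu+\varepsilon})$; since $\varepsilon>0$ is arbitrary this is \eqref{3.30}. The delicate point in this last step is the admissible range of $p$: one needs $p\geq1$, which the choice $p=(\mu-\varepsilon)^{-1}$ respects precisely in the regime $\mu\leq1+\varepsilon$, and it is exactly this regime in which the passage from $\mathfrak{S}_{p}$-summability to the stated pointwise rate goes through.
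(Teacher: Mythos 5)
Your proposal follows essentially the same route as the paper's proof: the sectorial bound $|\lambda|\le\sec\theta\,\mathrm{Re}\,\lambda$ coming from \eqref{3.28}, the Gohberg--Krein majorization of $\sum|\mathrm{Re}\,\lambda_{i}(R_{\tilde{W}})|^{p}$ by $\sum s_{i}^{p}(V)$ (which the paper obtains from the imaginary-component version, Theorem 6.1 of Gohberg--Krein, via the substitution $L\mapsto iL$ as in \eqref{3.31}--\eqref{3.33}), and then Theorem \ref{T3.5} combined with a choice of $p$ satisfying $\mu p>1$ and $1/p\geq\mu-\varepsilon$. Your two refinements are correct and welcome --- namely the observation that $S=I+B^{2}\geq I$ gives $\|S^{-1}\|^{p}\leq\|S^{-1}\|$, which the paper silently needs in order to pass from $\lambda_{i}^{p}(V)\leq\|S^{-1}\|^{p}\lambda_{i}^{p}(R_{H})$ to the constant stated in \eqref{3.29}, and the explicit tail estimate $n\,|\lambda_{2n}|^{p}\leq\sum_{i=n+1}^{2n}|\lambda_{i}|^{p}\rightarrow 0$ that justifies \eqref{3.35} --- while the constraint $p\geq1$ that you flag at the end is equally present in the paper's argument (its choice requires $\mu-\varepsilon<1/p\leq1$), so your proof establishes \eqref{3.30} for exactly the same range of $\varepsilon$ as the published one.
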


\begin{proof}
Let $L$ be a bounded    operator with  a compact imaginary component.  Note that  according to Theorem 6.1 \cite[p.81]{firstab_lit:1Gohberg1965},   we have
 \begin{equation}\label{3.31}
\sum\limits_{m=1}^{k}|{\rm Im}\,\lambda_{m}(L)|^{p}\leq \sum\limits_{m=1}^{k}
|s_{m} (L_{\mathfrak{I} } )|^{p},\;\left(k=1,\,2,...,\,\nu_{\,\mathfrak{I}}(L)\right),\,1\leq p<\infty,
\end{equation}
where    $ \nu_{\,\mathfrak{I}}(L) \leq \infty $ is   the sum of   all  algebraic multiplicities corresponding to   the not real  eigenvalues   of the operator $L$ (see  \cite[p.79]{firstab_lit:1Gohberg1965}).    It can easily be checked that
 \begin{equation}\label{3.32}
(iL)_{\mathfrak{I}}=L_{\mathfrak{R}},\; \mathrm{Im }\, \lambda_{m} (i\,L)= \mathrm{Re}\,  \lambda_{m}(L),\;m\in \mathbb{N}.
\end{equation}
By virtue of  \eqref{3.28}, we have ${\rm Re}\,\lambda_{m}(R_{\tilde{W}})>0,\,m=1,2,...,\nu\left(  R_{\tilde{W}} \right) .$ Combining this fact with     \eqref{3.32}, we get
 $\nu_{\mathfrak{I}}(i R_{\tilde{W}} )=\nu\left(R_{\tilde{W}}\right).$ Taking  into account the previous  equality  and    combining \eqref{3.31},\eqref{3.32},       we obtain
 \begin{equation}\label{3.33}
\sum\limits_{m=1}^{k}|{\rm Re}\,\lambda_{m}(R_{\tilde{W}})|^{p}\leq \sum\limits_{m=1}^{k}
|s_{m} (V )|^{p} ,\; \left(k=1,\,2,...\,,\nu  (R_{\tilde{W}})\right),\,V:= \left(R_{\tilde{W}}\right)_{\mathfrak{R}}.
\end{equation}
 Note that by virtue of \eqref{3.28}, we have
$$
|{\rm Im}\,\lambda_{m}(R_{\tilde{W}})|\leq  \tan \theta \,{\rm Re}\,\lambda_{m}\,(R_{\tilde{W}}),\,m\in \mathbb{N}.
$$
Hence
 \begin{equation}\label{3.34}
| \lambda_{m}(R_{\tilde{W}})|= \sqrt{|{\rm Im}\,\lambda_{m}(R_{\tilde{W}})|^{2}+|{\rm Re}\,\lambda_{m}(R_{\tilde{W}})|^{2}}\leq
$$
$$
  \leq \sqrt{\tan^{2}\theta+1} \;|{\rm Re}\,\lambda_{m}(R_{\tilde{W}})|
=\sec \theta\, |{\rm Re}\,\lambda_{m}(R_{\tilde{W}})|,\,m\in \mathbb{N}.
\end{equation}
Combining \eqref{3.33},\eqref{3.34}, we get
 \begin{equation*}
\sum\limits_{m=1}^{k}| \lambda_{m}(R_{\tilde{W}})|^{p}\leq \sec^{p}\!\theta\,\sum\limits_{m=1}^{k}
|s_{m} (V )|^{p} ,\; \left(k=1,\,2,...\,,\nu  (R_{\tilde{W}})\right).
\end{equation*}
Using   \eqref{3.9}, we complete     the proof of   inequality \eqref{3.29}.

Suppose $  \nu(R_{\tilde{W}})=\infty,\, \mu(H)\neq0 $ and let us prove \eqref{3.30}.   Note   that for    $\mu>0$ and for any $ \varepsilon>0,$ we can choose  $p$  so that
 $\mu p>1,\, \mu-\varepsilon<1/p .$
Using the condition $\mu p>1,$  we obtain  convergence of the series on  the left side of    \eqref{3.29}. It implies that
 \begin{equation}\label{3.35}
|\lambda_{i}(R_{\tilde{W}})| i^{1/p}\rightarrow 0,\;i\rightarrow\infty.
\end{equation}
It is obvious that
$$
|\lambda_{i}(R_{\tilde{W}})| i^{ \mu-\varepsilon}<|\lambda_{i}(R_{\tilde{W}})| i^{ 1/p} ,\,i\in \mathbb{N}.
$$
Taking into account \eqref{3.35}, we obtain  \eqref{3.30}.
\end{proof}

\section{Applications}

\noindent {\bf 1.} We begin with definitions.
Suppose  $\Omega$
is a convex domain of the n-dimensional Euclidian space with the  sufficient smooth
boundary,
$L_{2}(\Omega)$ is  a  complex Lebesgue space  of summable with square functions,   $H^{2}(\Omega),$ $H^{1}(\Omega)$
 are  complex Sobolev spaces, $D_{i}f:= \partial f/\partial x_{i} ,\,1 \leq i \leq n$ is the    weak   partial derivatives of  of the function $f.$   Consider a sum of a  uniformly elliptic operator
and the extension of the Kipriyanov fractional differential operator of   order $0 <\alpha < 1$ (see Lemma
2.5 \cite{firstab_lit:1kukushkin2018})
\begin{equation*}
 Lu:=-  D_{j} ( a^{ij} D_{i}f) \, +   \mathfrak{D}  ^{ \alpha }_{0+}f ,
 $$
 $$
   \mathrm{D}(L)=H^{2}(\Omega)\cap H^{1}_{0}(\Omega),
  \end{equation*}
 with the following  assumptions relative to the  real-valued   coefficients
\begin{equation*}
 a^{ij}(Q) \in C^{1}(\bar{\Omega}),\,   a^{ij}\xi _{i}  \xi _{j}  \geq   a  |\xi|^{2} ,\,  a  >0 .
 \end{equation*}
 It was proved in the  paper  \cite{firstab_lit:1kukushkin2018} that the operator
 $   L^{+}f:=-D_{i} ( a^{ij} D_{j}f)+ \mathfrak{D}   ^{ \alpha }_{d-}f,$
 $
 \mathrm{D}   (L^{+})=\mathrm{D}(L)
 $  is formal adjoint with respect  to $L.$
Note that  in accordance with Theorem 2    \cite{firstab_lit(JFCA)kukushkin2018} we have $\mathrm{R}(L) = \mathrm{R}(L^{+}) = L_{2}(\Omega),$ due to     Theorem 4.2 \cite{firstab_lit:1kukushkin2018} the operators
 $L,L^{+}$ are strictly accretive. Taking into account  these facts we can conclude that the operators $L,L^{+}$ are closed
(see problem 5.15 \cite[p.165]{firstab_lit:kato1980}). Consider the operator $L_{\mathfrak{R}}.$  Having made the   absolutely analogous
reasonings   as in the previous case,  we  conclude that the operator $L_{\mathfrak{R}}$ is closed. Applying
the   reasonings of  Theorem 4.3 \cite{firstab_lit:1kukushkin2018}, we obtain that the operator  $L_{\mathfrak{R}}$ is   selfadjoint and strictly accretive.
Recall  that to apply    the methods    described  in the paper  \cite{firstab_lit:Shkalikov A.} we must have some decomposition of the initial
operator $L$ on a sum of the main part   and    the  operator-perturbation, where
the main part must be an  operator of a special type   either a  selfadjoint or  a normal operator.
Note that a uniformly elliptic operator of  second order  is
neither selfadjoint  no normal  in general case.   To demonstrate  the   significance of the method obtained in this paper,   we would like to note  that    a search  for  a convenient decomposition of $L$ on a sum
of a selfadjoint operator and the  operator-perturbation does not seem to be a reasonable   way. Now  to justify this  claim  we consider one of  possible decompositions of $L$ on  a sum.   Consider  a selfadjoint strictly accretive operator
$  \mathcal{T}  :\mathfrak{H}\rightarrow \mathfrak{H}. $

\begin{definition}
In accordance with the   definition of the paper  \cite{firstab_lit:Shkalikov A.},  a quadratic form $\mathfrak{a}: = \mathfrak{a}[f]$ is called a $\mathcal{T}$ - subordinated form if the following condition holds
\begin{equation}\label{4.1}
\left|\mathfrak{a}[f]\right| \leq b\, \mathfrak{t}[f] +M\|f\|^{2}_{\mathfrak{H}},\;
  \mathrm{D}(\mathfrak{a}) \supset \mathrm{D}(\mathfrak{t}),\; b < 1,\; M > 0,
\end{equation}
where $\mathfrak{t}[f]=\|\mathcal{T}^{\frac{1}{2}}\|^{2}_{\mathfrak{H}},\,f\in \mathrm{D}(\mathcal{T}^{\frac{1}{2}}).$
 The form $\mathfrak{a}$  is called  a completely $\mathcal{T}$ - subordinated form    if besides   of \eqref{4.1}  we have the following additional condition  $\forall \varepsilon>0 \,\exists b,M>0:\, b <\varepsilon.$
\end{definition}
Let us consider the trivial  decomposition  of the operator $L$ on the sum $L=2 L_{\mathfrak{R}}-L^{+}$ and let us use the notation $\mathcal{T}:=2L_{\mathfrak{R}},\, \mathcal{A}:=-L^{+}.$ Then we have $L=\mathcal{T}+\mathcal{A}.$
Due to the sectorial property
proven in   Theorem 4.2 \cite{firstab_lit:1kukushkin2018}  we have
\begin{equation}\label{4.2}
 |(\mathcal{A}f,f)_{L_{2} }| \!=\!\sec \theta_{f}\, |\mathrm{Re}(\mathcal{A}f,f)_{L_{2} }|\! = \!\sec \theta_{f}\, \frac{1}{2}(\mathcal{T}f,f)_{L_{2} },\,f\in \mathrm{D}(\mathcal{T}),
\end{equation}
 where $ 0\leq\theta_{f}\leq \theta,\, \theta_{f}:=\left|\mathrm{arg}  (L^{+}f,f)_{L_{2} }\right|,\, L_{2}:=L_{2}(\Omega)  $   and $\theta$ is the semi-angle   corresponding to the sector $\mathfrak{L}_{0}(\theta).$
  Due to Theorem 4.3 \cite{firstab_lit:1kukushkin2018}  the operator $\mathcal{T}$ is m-accretive. Hence in consequence of Theorem
3.35  \cite[p.281]{firstab_lit:kato1980} we have that  $\mathrm{D}(\mathcal{T} )$   is a core of the operator $\mathcal{T}^{\frac{1}{2}}.$ It implies that we can extend   relation \eqref{4.2} to
\begin{equation}\label{4.3}
 \frac{1}{2} \, \mathbf{\mathfrak{t}}[f]\leq|\mathbf{\mathfrak{a }}[f]|\leq \sec \theta \frac{1}{2} \, \mathbf{\mathfrak{t}}[f],\, f\in \mathrm{D} (\mathbf{\mathfrak{t}}),
\end{equation}
where $\mathfrak{a}$ is a quadratic form generated by $\mathcal{A}$ and $\mathfrak{t}[f] = \|\mathcal{T}^{\frac{1}{2}}f\|^{2}_{\mathfrak{H}}.$
 If we consider
the case $0<\theta<\pi/3,$ then it is obvious  that there exist  constants  $b < 1$ and $M > 0$ such that
the following inequality holds
$$
|\mathfrak{a}[f]| \leq b\, \mathfrak{t}[f] +M\|f\|^{2}_{L_{2}},\;
 f\in \mathrm{D}(\mathfrak{t}).
$$
Hence the form $\mathfrak{a}$ is a $\mathcal{T}$ - subordinated form.
In accordance with  the  definition given in the paper \cite{firstab_lit:Shkalikov A.} it means $\mathcal{T}$ - subordination of the operator $\mathcal{A}$  in the sense of   form. Assume
that $\forall \varepsilon>0\,\exists b,M>0:\,  b <\varepsilon.$  Using  inequality \eqref{4.3},  we get
$$
   \frac{1}{2}\mathfrak{t}[f]   \leq \varepsilon \, \mathfrak{t}[f] + M(\varepsilon)\|f\|^{2}_{L_{2} };\; \mathfrak{t}[f]\leq \frac{2 M(\varepsilon)}{(1-2\varepsilon)}\|f\|^{2}_{L_{2} },  \,f\in  \mathrm{D}(\mathfrak{t}),\, \varepsilon < 1/2.
$$
Using the strictly accretive property of the operator $L$ (see inequality  (4.9) \cite{firstab_lit:1kukushkin2018}), we   obtain
$$
\|f\|^{2}_{H^{1}_{0} } C \leq  \mathfrak{t}[f],  \,f\in  \mathrm{D}(\mathfrak{t}).
$$
On the other hand, using the results of the paper  \cite{firstab_lit:1kukushkin2018},  it is easy to prove that $H^{1}_{0}(\Omega)\subset\mathrm{D}(\mathfrak{t}).$
Taking into account the facts considered above,   we get
$$
\|f\|_{H^{1}_{0} }   \leq  C \|f\|_{L_{2}},  \,f\in H^{1}_{0}(\Omega)  .
$$
    It cannot be! It is a well-known fact. This contradiction shows us that the form $\mathfrak{a}$ is not    a  completely $\mathcal{T}$ - subordinated form. It implies that we cannot  use     Theorem 8.4 \cite{firstab_lit:Shkalikov A.}   which could give us an opportunity to describe the spectral properties of the operator  $L.$
Note that the reasonings corresponding to  another trivial decomposition of $L$ on a sum  is analogous.

     This rather particular example does not aim to show the inability of using   remarkable
methods considered in the paper  \cite{firstab_lit:Shkalikov A.}  but only creates   prerequisite for    some   value  of    another
method  based on  using spectral properties of the real component of the  initial operator $L.$ Now we would like to demonstrate   the    effectiveness    of   this method. Suppose   $\mathfrak{H} := L_{2}(\Omega
),\, \mathfrak{H}^{+} := H^{1}_{0}(\Omega),\, Tf :=  -D_{j}(a^{ij}D_{i}f),\, Af :=
   \mathfrak{D}^{ \alpha }_{0+}f,\; \mathrm{D}(T),\mathrm{D}(A) =H^{2}(\Omega)\cap H_{0}^{1}(\Omega);$ then    due to the Rellich-Kondrachov  theorem we have that  condition
\eqref{2.1} is fulfilled. Due to  the  results obtained in the paper  \cite{firstab_lit:1kukushkin2018} we have that   condition  \eqref{2.3} is  fulfilled. Applying the results obtained in the paper  \cite{firstab_lit:1kukushkin2018}
we  conclude that the operator $L_{\mathfrak{R}}$ has   non-zero order. Hence we can apply the abstract results of
this paper to the   operator $L.$ In fact, Theorems \ref{T3.7}-\ref{T3.9} describe the  spectral properties of the operator $L.$

\noindent{\bf 2.}   We deal with the differential operator acting in the complex Sobolev  space and defined by the following
expression
$$
\mathcal{L}f := (c_{k}f^{(k)})^{(k)} + (c_{k-1}f^{(k-1)})^{(k-1)}+...+  c_{0}f,
$$
$$
\mathrm{D}(\mathcal{L}) = H^{2k}(I)\cap H_{0}^{k}(I),\,k\in \mathbb{N},
$$
where    $I: = (a, b) \subset \mathbb{R},$ the   complex-valued coefficients
$c_{j}(x)\in C^{(j)}(\bar{I})$ satisfy the condition $  {\rm sign} (\mathrm{Re} c_{j}) = (-1)^{j} ,\, j = 1, 2, ..., k.$
It is easy
to see that
$$
\mathrm{Re}(\mathcal{L}f,f)_{L_{2}(I)}\geq\sum\limits_{j=0}^{k}|\mathrm{Re} c_{j}|\,\|f^{(j)}\|^{2}_{L_{2}(I)}\geq C \|f^{(j)}\|^{2}_{H_{0}^{k}(I)},\;f\in \mathrm{D}(\mathcal{L}).
$$
On the other hand
$$
|(\mathcal{L}f,f)_{L_{2}(I)}|=\left|\sum\limits_{j=0}^{k}(-1)^{j}(c_{j}f^{(j)},g^{(j)} )_{L_{2}(I)}\right|\leq
\sum\limits_{j=0}^{k}\left|(c_{j}f^{(j)},g^{(j)} )_{L_{2}(I)}\right|\leq
$$
$$
\leq C \sum\limits_{j=0}^{k} \|f^{(j)}\| _{L_{2}(I)}\|g^{(j)}\| _{L_{2}(I)}\leq
\|f \| _{H^{k}_{0}(I)}\|g \| _{H^{k}_{0}(I)},\;f\in \mathrm{D}(\mathcal{L}).
$$
Consider the  Riemann-Liouville   operators of fractional differentiation of   arbitrary non-negative
order $\alpha$ (see \cite[p.44]{firstab_lit:samko1987})  defined by the expressions
\begin{equation*}
 D_{a+}^{\alpha}f=\left(\frac{d}{dx}\right)^{[\alpha]+1}\!\!\!\!I_{a+}^{1-\{\alpha\}}f;\;
 D_{b-}^{\alpha}f=\left(-\frac{d}{dx}\right)^{[\alpha]+1}\!\!\!\!I_{b-}^{1-\{\alpha\}}f,
\end{equation*}
where the fractional integrals of      arbitrary positive order  $\alpha$ defined by
$$
\left(I_{a+}^{\alpha}f\right)\!(x)=\frac{1}{\Gamma(\alpha)}\int\limits_{a}^{x}\frac{f(t)}{(x-t)^{1-\alpha}}dt,
 \left(I_{b-}^{\alpha}f\right)\!(x)=\frac{1}{\Gamma(\alpha)}\int\limits_{x}^{b}\frac{f(t)}{(t-x)^{1-\alpha}}dt
, f\in L_{1}(I).
$$
Suppose  $0<\alpha<1,\, f\in AC^{l+1}(\bar{I}),\,f^{(j)}(a)=f^{(j)}(b)=0,\,j=0,1,...,l;$ then the next formulas follows
from   Theorem 2.2 \cite[p.46]{firstab_lit:samko1987}
\begin{equation}\label{4.4}
 D_{a+}^{\alpha+l}f= I_{a+}^{1- \alpha }f^{(l+1)},\;
 D_{b-}^{\alpha+l}f= (-1)^{l+1}I_{b-}^{1- \alpha }f^{(l+1)}.
\end{equation}
  Further, we need  the following inequalities    (see  \cite{firstab_lit:(vlad)kukushkin2016})
\begin{equation}\label{4.5}
\mathrm{Re} (D_{a+}^{\alpha}f,f)_{L_{2}(I)}\geq C\|f\|^{2}_{L_{2}(I)},\,f\in I_{a+}^{\alpha}(L_{2}),\;
$$
$$
\mathrm{Re} (D_{b-}^{\alpha}f,f)_{L_{2}(I)}\geq C\|f\|^{2}_{L_{2}(I)},\,f\in I_{b-}^{\alpha}(L_{2}),
\end{equation}
where $I_{a+}^{\alpha}(L_{2}),I_{b-}^{\alpha}(L_{2})$ are the  classes of  the  functions representable by the fractional integrals (see\cite{firstab_lit:samko1987}).
  Consider the following operator  with the  constant  real-valued  coefficients
$$
\mathcal{D}f:=p_{n}D_{a+}^{\alpha_{n}}+q_{n}D_{b-}^{\beta_{n}}+p_{n-1}D_{a+}^{\alpha_{n-1}}+q_{n-1}D_{b-}^{\beta_{n-1}}+...+
p_{0}D_{a+}^{\alpha_{0}}+q_{0}D_{b-}^{\beta_{0}},
$$
$$
\mathrm{D}(\mathcal{D}) = H^{2k}(I)\cap H_{0}^{k}(I),\,n\in \mathbb{N},
$$
where $\alpha_{j},\beta_{j}\geq 0,\,0 \leq [\alpha_{j}],[\beta_{j}] < k,\, j = 0, 1, ..., n.,\;$
\begin{equation*}
 q_{j}\geq0,\;{\rm sign}\,p_{j}= \left\{ \begin{aligned}
  (-1)^{\frac{[\alpha_{j}]+1}{2}},\,[\alpha_{j}]=2m-1,\,m\in \mathbb{N},\\
\!\!\!\!\!\!\! \!\!\!\!(-1)^{\frac{[\alpha_{j}]}{2}},\;\,[\alpha_{j}]=2m,\,\,m\in \mathbb{N}_{0}   .\\
\end{aligned}
\right.
\end{equation*}
Using \eqref{4.4},\eqref{4.5},  we get
$$
(p_{j}D_{a+}^{\alpha_{j}}f,\!f)_{L_{2}(I)}\!=
\!p_{j}\!\left(\!\!\left(\frac{d}{dx}\right)^{\!\!\!m}\!\!D_{a+}^{m-1+\{\alpha_{j}\}}\!\!f,\!f   \!\right)_{\!\!L_{2}(I)}\!\!\!\!\!\! =
(\!-1)^{m}p_{j}\!\left(\! I_{a+}^{1-\{\alpha_{j}\}}\!\!f^{(m)}\!\!,\!f^{(m)}   \!\right)_{\!\!L_{2}(I)}\!\! \geq
$$
$$
\geq C\left\|I_{a+}^{1-\{\alpha_{j}\}}f^{(m)}\right\|^{2}_{L_{2}(I)}=
C\left\|D_{a+}^{\{\alpha_{j}\}}f^{(m-1)}\right\|^{2}_{L_{2}(I)}\geq C \left\| f^{(m-1)}\right\|^{2}_{L_{2}(I)},
$$
 where  $f \in \mathrm{D}(\mathcal{D})$ is    a real-valued function and   $ [\alpha_{j}]=2m-1,\,m\in \mathbb{N}.$
  Similarly,  we obtain for orders  $ [\alpha_{j}]=2m,\,m\in \mathbb{N}_{0}$
$$
(p_{j}D_{a+}^{\alpha_{j}}f,f)_{L_{2}(I)}=p_{j}\left( D_{a+}^{2m +\{\alpha_{j}\}}f,f   \right)_{L_{2}(I)}=(-1)^{m}p_{j}\left( D_{a+}^{m+\{\alpha_{j}\}}f ,f^{(m)}   \right)_{L_{2}(I)}=
$$
$$
=(-1)^{m}p_{j}\left( D_{a+}^{ \{\alpha_{j}\}}f^{(m)} ,f^{(m)}   \right)_{\!L_{2}(I)}\geq C \left\| f^{(m)}\right\|^{2}_{L_{2}(I)}.
$$
Thus in both cases  we have
$$
(p_{j}D_{a+}^{\alpha_{j}}f,f)_{L_{2}(I)}\geq C \left\| f^{(s)}\right\|^{2}_{L_{2}(I)},\;s= \big[[\alpha_{j}]/2\big] .
$$
 In the same way, we obtain the inequality
$$
(q_{j}D_{b-}^{\alpha_{j}}f,f)_{L_{2}(I)}\geq C \left\| f^{(s)}\right\|^{2}_{L_{2}(I)},\;s= \big[[\alpha_{j}]/2\big] .
$$
  Hence in the
complex case we have
$$
\mathrm{Re}(\mathcal{D}f,f)_{L_{2}(I)}\geq C \left\| f \right\|^{2}_{L_{2}(I)},\;f\in \mathrm{D}(\mathcal{D}).
$$
Combining   Theorem 2.6 \cite[p.53]{firstab_lit:samko1987}  with  \eqref{4.4}, we get
$$
\left\| p_{j}D_{a+}^{\alpha_{j}}f \right\| _{L_{2}(I)}=  \left\|  I_{a+}^{1-\{\alpha_{j}\}}f^{([\alpha_{j}]+1)} \right\| _{L_{2}(I)}
 \leq C   \left\|   f^{([\alpha_{j}]+1)} \right\|_{L_{2}(I)}\leq C   \left\|   f  \right\|_{H^{k}_{0}(I)};
 $$
 $$
 \;\left\|q_{j}D_{b-}^{\alpha_{j}}f \right\| _{L_{2}(I)}
\leq  C   \left\|   f  \right\|_{H^{k}_{0}(I)},\;f\in \mathrm{D}(\mathcal{D}).
$$
 Hence, we obtain
$$
\left\| \mathcal{D}f \right\| _{L_{2}(I)}\leq C \left\|  f \right\|_{H^{k}_{0}(I)},\;f\in \mathrm{D}(\mathcal{D}).
$$
Now we can formulate the main result. Consider the operator
$$
G=\mathcal{L}+\mathcal{D},
$$
$$
\mathrm{D}(G)=H^{2k}(I)\cap H_{0}^{k}(I).
$$
Suppose  $\mathfrak{H} := L_{2}(I),\, \mathfrak{H}^{+} := H_{0}^{k}(I),\, T := \mathcal{L},\, A := \mathcal{D};$ then due to the well-known fact of the  Sobolev spaces theory
      condition \eqref{2.1} is fulfilled, due to the reasonings given above condition  \eqref{2.3}
is fulfilled. Taking into account the equality
$$
\mathcal{L}_{\mathfrak{R}}f = ( \mathrm{Re}c_{k} f^{(k)})^{(k)} + ( \mathrm{Re}c_{k-1} f^{(k-1)})^{(k-1)} + ... +  \mathrm{Re}c_{0} f,\;f\in \mathrm{D}(\mathcal{D})
$$
and using  the method described
  in the paper  \cite{firstab_lit:2kukushkin2017}, we   can prove   that the operator $\tilde{G}_{\mathfrak{R}}$  has   non-zero order.
Hence we can successfully  apply  the abstract results of this paper  to the   operator $G.$ Indeed,  Theorems \ref{T3.7}-\ref{T3.9} describe the  spectral properties of the operator $G.$
\bigskip

\subsection*{Acknowledgments}
  Gratitude is expressed to Alexander L. Skubachevskii for   valuable remarks and comments
made during the report devoted to the results of this work and took place 31.10.2017 at the
seminar on differential and functional-differential equations, Department of applied mathematics,
faculty of physics, mathematics and natural Sciences of Peoples' Friendship University of Russia,
Moscow.
The author warmly thanks Tatiana A. Suslina for   valuable comments made during the report
devoted to the  results of this work and took place 06.12.2017 at the seminar of the Department of
mathematical physics St. Petersburg state University, Saint Petersburg branch of V.A. Steklov
Mathematical Institute of the Russian Academy of science, Russia, Saint Petersburg.
Gratitude is expressed to Andrei A. Shkalikov for the number of valuable recommendations
and remarks made during the  report which took place 07.11.2018    at the conference on Partial Differential Equations
and Applications in Memory of Professor B.Yu. Sternin, RUDN University,
Moscow.

\bibliographystyle{amsplain}

\end{document}